\documentclass[10pt,leqno]{amsart}
\usepackage{amssymb,amsthm}
\usepackage{amsmath}
\usepackage{pictexwd,dcpic}
\usepackage[all]{xy}
\usepackage{hyperref}
\hypersetup{
    bookmarks=true,         % show bookmarks bar?
    unicode=false,          % non-Latin characters in AcrobatÕs bookmarks
    pdftoolbar=true,        % show AcrobatÕs toolbar?
    pdfmenubar=true,        % show AcrobatÕs menu?
    pdffitwindow=false,     % window fit to page when opened
    pdfstartview={FitH},    % fits the width of the page to the window
    pdftitle={Universal Deformation Rings of String Modules over a Certain Tame Algebra },    % title
    pdfauthor={Jose A. Velez-Marulanda},     % author
    pdfsubject={Representation Theory of Associative Algebras},   % subject of the document
    pdfcreator={Jose A. Velez-Marulanda},   % creator of the document
    pdfproducer={Jose A. Velez-Marulanda}, % producer of the document
    pdfkeywords={Universal deformation rings} {Frobenius algebras} {Stable endomorphism rings}, % list of keywords
    pdfnewwindow=true,      % links in new window
    colorlinks=true,       % false: boxed links; true: colored links
    linkcolor=blue,          % color of internal links
    citecolor=blue,        % color of links to bibliography
    filecolor=blue,      % color of file links
    urlcolor=blue           % color of external links
}

\usepackage{bbm}

\theoremstyle{definition}
 \newtheorem{definition}{Definition}[section]

\theoremstyle{plain}
 \newtheorem{proposition}[definition]{Proposition}

\theoremstyle{plain}
 \newtheorem{theorem}[definition]{Theorem}

\theoremstyle{definition}
 \newtheorem{example}[definition]{Example}

\theoremstyle{plain}

\theoremstyle{plain}

\theoremstyle{remark}
 \newtheorem{remark}[definition]{Remark}
 \newtheorem{claimproof}[definition]{Claim}
 \newtheorem*{proofclaim}{Proof of Claim}
\setlength{\textwidth}{6.5in}
\setlength{\textheight}{8.5in}
\setlength{\evensidemargin}{0pt}
\setlength{\oddsidemargin}{0pt}
\setlength{\topmargin}{0pt}
\setlength{\footskip}{0.5in}

\newcommand{\Ext}{\mathrm{Ext}}
\newcommand{\End}{\mathrm{End}}

\newcommand{\Mat}{\mathrm{Mat}}
\newcommand{\Hom}{\mathrm{Hom}}
\newcommand{\Ca}{\mathcal{C}}
\newcommand{\Fun}{\mathrm{F}}
\newcommand{\Def}{\mathrm{Def}}
\newcommand{\Sets}{\mathrm{Sets}}

\newcommand{\SEnd}{\underline{\End}}
\newcommand{\A}{\Lambda}
\newcommand{\surjection}{\twoheadrightarrow}
\newcommand{\injection}{\hookrightarrow}

\renewcommand{\k}{\Bbbk}

\renewcommand{\1}{\mathbbm{1}}
\newcommand{\invlim}{\varprojlim}
\newcommand{\Ar}{\A_{\bar{r}}}

\title{Universal Deformation Rings of Strings Modules over a Certain Symmetric Tame Algebra}

\thanks{The author was supported by the Release Time for Research Scholarship of the Office of Academic Affairs and the Faculty Research Seed Grant of the Office of Sponsored 
Programs \& Research Administration at the Valdosta State University.}

\author{Jos\'e A. V\'elez-Marulanda}
\address{Department of Mathematics \& Computer Science, Valdosta State University,
2072 Nevins Hall, 1500 N. Patterson St, Valdosta, GA,  31698-0040}
\email{javelezmarulanda@valdosta.edu}
\keywords{Universal deformation rings \and Frobenius algebras \and Stable endomorphism rings}

\begin{document}
\maketitle
\begin{abstract}
Let $\k$ be an algebraically closed field, let $\A$ be a finite dimensional $\k$-algebra and let $V$ be a $\A$-module with stable endomorphism ring isomorphic to $\k$. If $\A$ is 
self-injective then $V$ has a universal deformation ring $R(\A,V)$, which is a complete local commutative Noetherian $\k$-algebra with residue field $\k$. Moreover,  if $\Lambda$ 
is also a Frobenius $\k$-algebra then $R(\A,V)$ is stable under syzygies.  We use these facts to determine the universal deformation rings of string $\Ar$-modules whose 
stable endomorphism ring isomorphic to $\k$, where $\Ar$ is a symmetric special biserial $\k$-algebra that has quiver with relations depending on the four parameters $
\bar{r}=(r_0,r_1,r_2,k)$ with $r_0,r_1,r_2\geq 2$ and $k\geq 1$.  
\keywords{Universal deformation rings \and Frobenius algebras \and Stable endomorphism rings \and Special biserial algebras}
\subjclass[2000]{16G10 \and 16G20 \and 20C20}
\end{abstract}
\section{Introduction}
Let $\k$ be a field of arbitrary characteristic, and let denote by $\hat{\Ca}$ the category of all complete local commutative Noetherian $\k$-algebras with residue field $\k$. Suppose 
that $\A$ is a fixed finite dimensional $\k$-algebra and let $V$ be a finitely generated $\A$-module.  Let $R$ be 
an arbitrary object in $\hat{\Ca}$. A {\it lift} $(M,\phi)$ of $V$ over $R$ is a finitely generated $R\otimes_\k\A$-module $M$ 
that is free over $R$ together with an isomorphism of $\A$-modules $\phi:\k\otimes_RM\to V$. If $\A$ is self-injective and the stable endomorphism ring of $V$ is isomorphic to $\k$, 
then there exists a particular object  $R(\A,V)$ in $\hat{\Ca}$ and a lift $(U(\A,V),\phi_{U(\A,V)})$ of $V$ over $R(\A,V)$, which is universal with respect to all isomorphism classes of 
lifts of $V$ over such $\k$-algebras $R$ (see \cite{blehervelez} and \S \ref{section2}). The ring $R(\A,V)$ and the isomorphism class of the lift $(U(\A,V),\phi_{U(\A,V)})$ are 
respectively called the {\it universal deformation ring} and the {\it universal deformation} of $V$. 
Traditionally, universal deformations rings are studied when $\A$ is equal to a group algebra $\k G$, where $G$ is a finite group and $\k$ has positive characteristic $p$ (see e.g., 
\cite{bleher1,bleher2,bleher3,bleher4,bleher5,bleher6,bleher7}).  In particular, it was proved by {\sc F. M. Bleher} and {\sc T. Chinburg} in \cite{bleher2} that if $V$ is a finitely generated $\k G$-module whose stable endomorphism ring is isomorphic to $\k$, then $V$ has a universal deformation ring $R(G,V)$. Observe that $\k G$ is an example of a self-injective $\k$-algebra (see e.g., \cite[Prop. 3.1.2]{benson}).  This approach has recently led to the solution of various open problems, e.g., the construction of representations 
whose universal deformation rings are not local complete intersections (see \cite{bleher1,bleher3,bleher4}). Universal deformation rings of modules over more general finite 
dimensional algebras have been studied by many authors in different contexts (see e.g., \cite{ile,yau} and their references). The main motivation of this article is that sophisticated 
results from representation theory of finite dimensional algebras,  such as Auslander-Reiten quivers, stable equivalences, and combinatorial description of modules can be used to 
arrive at a deeper understanding of universal deformation rings.

In this article,  we assume that $\k$ is algebraically closed and consider the basic $\k$-algebra 
\begin{equation}\label{algebra}
\Ar=\k Q/I_{\bar{r}}
\end{equation}
where $\bar{r}=(r_0,r_1,r_2,k)$ with $r_0,r_1,r_2\geq 2$, $k\geq 1$, $Q$ is the quiver
\begin{equation*}\label{quiver1}
Q= \xymatrix@1@=20pt{
	\ar@(ul,dl)_{\zeta_0} \underset{0}{.}\ar[rr]^{\tau_0}& & \underset{1}{.}\ar[dl]^{\tau_1}\ar@(ur,dr)^{\zeta_1}\\
	&\underset{2}{.}\ar[ul]^{\tau_2}\ar@(dl,dr)_{\zeta_2}&\\
	}
\end{equation*}
and $I_{\bar{r}}$ is the ideal of the path algebra $\k Q$ generated by the relations 
\begin{equation*}\label{ideal1}
 \{\tau_0\zeta_0, \zeta_1\tau_0,\tau_1\zeta_1,\zeta_2\tau_1,\tau_2\zeta_2,\zeta_0\tau_2,\zeta_0^{r_0}-(\tau_2\tau_1\tau_0)^k,\zeta_1^{r_1}-(\tau_0\tau_2\tau_1)^k, \zeta_2^{r_2}-
(\tau_1\tau_0\tau_2)^k\}.
\end{equation*}

The algebra $\Ar$ is among the class of algebras of dihedral type, which were introduced by {\sc K. Erdmann} in \cite{erdmann} to classify all tame blocks of group algebras of finite groups with dihedral 
defect groups up to Morita equivalence. However, $\Ar$ is not Morita equivalent to a block of a group algebra (see \cite[Lemma IX.5.4]{erdmann}). Since $\Ar$ is a special biserial 
algebra, all the 
non-projective indecomposable $\Ar$-modules can be described combinatorially as so-called strings and bands modules as introduced in \cite{buri} (see also \S \ref{ape1}). 
We denote by $\Gamma_s(\Ar)$ the stable Auslander-Reiten quiver of $\Ar$. The components of $\Gamma_s(\Ar)$ consisting in 
string modules are two $3$-tubes and infinitely many components of type $\mathbb{ZA}_\infty^\infty$. The components consisting of band modules are infinitely many $1$-tubes. 

In \cite{blehervelez}, the particular case $\bar{r}=(2,2,2,1)$ has being considered. In particular, there are exactly three components $\mathfrak{C}$ of $\Gamma_s(\Ar)$ of type $\mathbb{ZA}_\infty^\infty$, which each contain a simple $\Ar$-module. If $\mathfrak{C}$ is such a component  then 
$\Omega(\mathfrak{C})=\mathfrak{C}$ and there are exactly three $\Omega$-orbits of $\A_{(2,2,2,1)}$-modules in $\mathfrak{C}$ whose stable endomorphism ring is isomorphic to $
\k$; the universal deformation rings are either isomorphic to $\k$, or to $\k[[t]]/(t^2)$, or to $\k[[t]]$ (see \cite[Prop. 3.9]{blehervelez}). Moreover, if $\mathfrak{T}$ is one $3$-tubes of $
\Gamma_s(\A_{(2,2,2,1)})$ then $\Omega(\mathfrak{T})$ is the other $3$-tube and there are exactly three $\Omega$-orbits of $\A_{(2,2,2,1)}$-modules in $\mathfrak{T}$ whose stable 
endomorphism ring is isomorphic to $\k$; the universal deformation rings are either isomorphic to $\k$ or to $\k[[t]]$ (see \cite[Prop. 3.11]{blehervelez}).

In this article, we let $\bar{r}=(r_0,r_1,r_2,k)$ with $r_0,r_1,r_2\geq 2$ and $k\geq 1$ be arbitrary. We study the two $3$-tubes and the components $\mathfrak{C}$ of $\Gamma_s(\Ar)
$ of type $\mathbb{ZA}_\infty^\infty$ containing a module whose endomorphism ring is isomorphic to $\k$. Our goal is to investigate how universal deformation rings change when 
inflating modules from $\A_{(r_0,r_1,r_2,k)}$ to $\A_{(r_0',r_1',r_2',k')}$, where $\A_{(r_0',r_1',r_2',k')}$ 
surjects onto $\A_{(r_0,r_1,r_2,k)}$ when $r_0'\ge r_0$, $r_1'\ge r_1$, $r_2'\ge r_2$, $k'\ge k$. 

If $M$ and $N$ are two indecomposable $\Ar$-modules belonging to the same component of $\Gamma_s(\Ar)$, we say that $N$ is a {\it successor} of $M$ provided that there exists an irreducible homomorphism $M\to N$. Throughout this article, we identify the vertices of the quiver $Q$ with elements of the cyclic group with three elements $\mathbb{Z}/3$.

A summary of the main results concerning $\Ar=\k Q/I_{\bar{r}}$ is as follows (cf. \cite[Prop. 3.9, Prop. 3.11]{blehervelez}); for more precise statements, see Propositions \ref{prop4}, 
\ref{prop5}, \ref{prop6} and \ref{prop7}.

\begin{theorem}\label{mainthm}
Let $\Ar$ be as in (\ref{algebra}) where $\bar{r}=(r_0,r_1,r_2,k)$ with $r_0,r_1,r_2\geq 2$ and $k\geq 1$, and let $\Gamma_s(\Ar)$ denote the stable Auslander-
Reiten quiver of $\Ar$.
\begin{enumerate}
\item If $\mathfrak{T}$ is one of two the $3$-tubes then $\Omega(\mathfrak{T})$ is the other $3$-tube. There are exactly three $\Omega$-orbits of 
modules in $\mathfrak{T}\cup\Omega(\mathfrak{T})$ whose stable endomorphism ring is isomorphic to $\k$. If $X_0$ is a module that belongs to the boundary of $\mathfrak{T}$, 
then these three $\Omega$-orbits are represented by $X_0$, by a successor $X_1$ of $X_0$, and by a successor $X_2$ of $X_1$ that does not lie in the $\Omega$-orbit of 
$X_0$. The universal deformation rings are 
\begin{align*}
R(\Ar,X_0)\cong \k,&& R(\Ar,X_1)\cong \k, && R(\Ar,X_2)\cong \k[[t]]. 
\end{align*}
\item There are exactly three distinct components $\mathfrak{A}_0$, $\mathfrak{A}_1$ and  $\mathfrak{A}_2$ of $\Gamma_s(\Ar)$ of type $\mathbb{ZA}_\infty^\infty$, which each 
contain exactly one simple $\Ar$-module. For all  $i\in \{0,1,2\}\mod 3$, the component $\mathfrak{A}_i$ is $\Omega$-stable if and only if $r_i=2$ and there are exactly three $\Omega
$-orbits of modules in $\mathfrak{A}_i\cup\Omega(\mathfrak{A}_i)$ whose stable endomorphism ring is isomorphic to $\k$. If for all $i\in \{0,1,2\}\mod 3$, $U_{i,0}$ denotes the unique 
simple module lying in $\mathfrak{A}_i$, then these three $\Omega$-orbits are represented by $U_{i,0}$, by a successor $U_{i,1}$ of $U_{i,0}$, and by a successor $U_{i,2}$ of 
$U_{i,1}$ that does not lie in the $\Omega$-orbit of $U_{i,0}$. The universal deformation rings are
\begin{align*}
R(\Ar,U_{i,0})\cong \k[[t]]/(t^{r_i}),&& R(\Ar,U_{i,1})\cong \k, && R(\Ar,U_{i,2})\cong \k[[t]]. 
\end{align*}
\item There are three distinct components $\mathfrak{B}_0$, $\mathfrak{B}_1$ and $\mathfrak{B}_2$  of $\Gamma_s(\Ar)$ of type $\mathbb{ZA}_\infty^\infty$ that contain exactly a 
module of length $1$ whose endomorphism ring is isomorphic to $\k$. Let $i\in \{0,1,2\}\mod 3$ and let $V_{i,0}$ be a module of minimal length in $\mathfrak{B}_i$. If $k=1$ then $
\mathfrak{B}_i=\Omega(\mathfrak{A}_{i+2})$,  where $\mathfrak{A}_{i+2}$ is as in $\mathrm{(ii)}$. In particular, $\mathfrak{B}_i$ is $\Omega$-stable if and only if $k=1$ and $r_{i+2}
=2$. There are exactly three $\Omega$-orbits of modules in $\mathfrak{B}_i\cup\Omega(\mathfrak{B}_i)$ whose stable endomorphism ring is isomorphic to $\k$. These three $
\Omega$-orbits are represented by $V_{i,0}$, by a successor $V_{i,1}$ of $V_{i,0}$, and by a successor $V_{i,-1}$ of $V_{i,0}$ that does not lie in the $\Omega$-orbit of $V_{i,1}$. If 
$k=1$ then the universal deformation rings are
\begin{align*}
R(\Ar,V_{i,0})\cong \k,&& R(\Ar,V_{i,1})\cong \k[[t]]/(t^{r_{i+2}}), && R(\Ar,V_{i,-1})\cong \k[[t]]. 
\end{align*}
If $k\geq 2$ then the universal deformation rings are
\begin{align*}
R(\Ar,V_{i,0})\cong \k,&& R(\Ar,V_{i,1})\cong \k[[t]], && R(\Ar,V_{i,-1})\cong \k[[t]]. 
\end{align*}

\item If $k\geq2$ then there are three distinct components $\mathfrak{C}_0$, $\mathfrak{C}_1$ and $\mathfrak{C}_2$ of $\Gamma_s(\Ar)$ of type $\mathbb{ZA}_\infty^\infty$, which each contain a module of length $2$ whose endomorphism rings is isomorphic to $\k$. Let $i\in\{0,1,2\}\mod 3$ and let $W_{i,0}$ be a module of minimal length in $\mathfrak{B}_i$. 
For all $i\in\{0,1,2\}\mod 3$, the component $\mathfrak{C}_i$ is $\Omega$-stable if and only if $k=2$, and there are exactly three $\Omega$-orbits of modules in $\mathfrak{C}_i$ 
whose stable endomorphism ring is isomorphic to $\k$. These three $\Omega$-orbits are represented by $W_{i,0}$, by a 
successor $W_{i,-1}$ of $W_{i,0}$, and by a successor $W_{i,-2}$ of $W_{i,-1}$ that does not lie in the $\Omega$-orbit of $W_{i,0}$. The universal deformation rings are
\begin{align*}
R(\Ar,W_{i,0})\cong \k[[t]]/(t^k),&& R(\Ar,W_{i,-1})\cong \k, && R(\Ar,W_{i,-2})\cong \k[[t]]. 
\end{align*}
\end{enumerate}
\end{theorem}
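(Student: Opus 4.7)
The plan is to extend the approach of \cite[Prop. 3.9, Prop. 3.11]{blehervelez} from the special case $\bar r=(2,2,2,1)$ to arbitrary $\bar r=(r_0,r_1,r_2,k)$. Since $\Ar$ is symmetric and hence Frobenius, the results quoted in the introduction guarantee that every $\Ar$-module $V$ with $\SEnd_{\Ar}(V)\cong\k$ has a universal deformation ring $R(\Ar,V)$ that is invariant under taking syzygies. It therefore suffices to identify one representative of each $\Omega$-orbit of such modules inside each component of $\Gamma_s(\Ar)$ under consideration, and to compute the universal deformation ring of that representative.

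Using the classification of indecomposable $\Ar$-modules as string and band modules \cite{buri}, I would first pin down the shape of the relevant components of $\Gamma_s(\Ar)$. Band modules may be discarded, since they lie in $1$-tubes and have stable endomorphism ring strictly larger than $\k$. For string modules I would apply the standard hook/cohook recipe for Auslander-Reiten sequences in a special biserial algebra to recognize the two $3$-tubes in (i) and the $\mathbb{ZA}_\infty^\infty$-components $\mathfrak{A}_i,\mathfrak{B}_i,\mathfrak{C}_i$ appearing in (ii)-(iv). A direct enumeration of common substrings modulo maps factoring through indecomposable projectives then singles out the representatives $X_j, U_{i,j}, V_{i,j}, W_{i,j}$ as exactly those strings in the relevant components with stable endomorphism ring $\k$. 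At the same time, computing $\Omega$ on each of these strings via the relations $\tau_i\zeta_i=\zeta_{i+1}\tau_i=0$ and $\zeta_i^{r_i}=(\tau_{i+2}\tau_{i+1}\tau_i)^k$ establishes the $\Omega$-stability criteria in the statement, the pairing of the two tubes in (i), and in particular the coincidence $\mathfrak{B}_i=\Omega(\mathfrak{A}_{i+2})$ when $k=1$.

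With the orbit representatives identified, the universal deformation rings are computed by pinning down $R(\Ar,V)$ through its tangent space: $\dim_\k \Ext^1_{\Ar}(V,V)$ bounds the minimal number of generators of the maximal ideal of $R(\Ar,V)$, and a direct AR-triangle or string calculation shows that this dimension is at most one in every case, so $R(\Ar,V)$ is a quotient of $\k[[t]]$. For the representatives with $\Ext^1(V,V)=0$ one obtains $R(\Ar,V)\cong\k$ at once. For the remaining representatives, I would construct an explicit lift of $V$ over $\k[[t]]$ by deforming the string presentation in the direction dictated by a chosen cocycle, and then read off whether the lift is unobstructed, yielding $R(\Ar,V)\cong\k[[t]]$, or whether an obstruction appears at $t^{r_i}$ or $t^k$ coming from the relation $\zeta_i^{r_i}=(\tau_{i+2}\tau_{i+1}\tau_i)^k$, yielding the prescribed truncated power series.

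The main obstacle will be precisely this last step: certifying that the explicit lifts are truly universal, that no earlier obstruction has been overlooked, and that no higher power of $t$ can be accommodated. I expect to control this by combining the $\Ext^1$- and $\Hom$-dimension counts with the consistency check $R(\Ar,V)\cong R(\Ar,\Omega V)$ provided by syzygy invariance, and by comparing lifts across the coincidences of components (notably $\mathfrak{B}_i=\Omega(\mathfrak{A}_{i+2})$ for $k=1$), which explains the dichotomy between $k=1$ and $k\ge 2$ in (iii) and the appearance of (iv) only for $k\ge 2$. The detailed verifications for the four families of components are then carried out separately in Propositions \ref{prop4}, \ref{prop5}, \ref{prop6} and \ref{prop7}.
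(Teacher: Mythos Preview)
Your proposal is correct and follows essentially the same approach as the paper: the theorem is a summary whose substance is delegated to Propositions \ref{prop4}--\ref{prop7}, and in each of those the paper proceeds exactly as you outline---using hooks/co-hooks to parametrize the component, Krause's basis of canonical homomorphisms to isolate the strings with $\SEnd_{\Ar}\cong\k$ and to compute $\Ext^1$, and then either concluding $R\cong\k$ when $\Ext^1=0$ or building explicit towers of string lifts over $\k[[t]]/(t^{l+1})$ (with the obstruction argument for the truncated cases and an inverse limit for the $\k[[t]]$ cases). The reduction of the $k=1$ case of part (iii) to part (ii) via $\mathfrak{B}_i=\Omega(\mathfrak{A}_{i+2})$ and Theorem~\ref{thm3}(iii) is also exactly what the paper does in Proposition~\ref{prop5}.
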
 

This article is organized as follows. In \S \ref{section2}, we recall the definitions of deformations and universal deformation rings and summarize some of their properties. In \S 
\ref{section3}, we give a precise description of string modules for $\Ar$, describe the components of $\Gamma_s(\Ar)$ that contain string modules using hooks and co-hooks (see \cite{buri}), and give a description of the homomorphisms between strings modules as determined in \cite{krause}. Moreover, we describe the indecomposable projective modules for $\Ar$ and classify all $\Ar$-modules with endomorphism ring isomorphic to $\k$ (see Proposition \ref{prop2}). In \S \ref{section4}, we prove Theorem \ref{mainthm}.

See e.g., \cite{auslander,benson,erdmann} for further information about basic concepts from representation theory of finite dimensional algebras, such as the definition and properties of the syzygy functor $\Omega$ and the definition of the Auslander-Reiten quiver of an arbitrary Artinian algebra $\A$. 

\section{Universal Deformation Rings}
\label{section2}
Let $\k$ be a a field of arbitrary characteristic and denote by $\hat{\Ca}$ the category of all complete local commutative Noetherian $\k$-algebras with residue field $\k$. Note that 
the morphisms in $\hat{\Ca}$ are continuous $\k$-algebra homomorphisms that induce the identity map on $\k$.  Suppose that $\A$ is a finite dimensional $\k$-algebra and 
$V$ is a fixed finitely generated $\A$-module. We denote by $\End_\A(V)$ (respectively, by $\SEnd_\A(V)$) the endomorphism ring (respectively, the stable endomorphism ring) of $V
$. Let $R$ be an arbitrary object in $\hat{\Ca}$. A {\it lift} $(M,\phi)$ of $V$ over $R$ is a finitely generated $R\otimes_\k\A$-module $M$ 
that is free over $R$ 
together with an isomorphism of $\A$-modules $\phi:\k\otimes_RM\to V$. Two lifts $(M,\phi)$ and $(M',\phi')$ over $R$ are {\it isomorphic} if there exists an $R\otimes_\k\A$-module 
isomorphism $f:M\to M'$ such that $\phi'\circ (\text{id}_\k\otimes f)=\phi$, where $\text{id}_\k$ denotes the identity map on $\k$.
If $(M,\phi)$ is a lift of $V$ over $R$ we  denote by $[M,\phi]$ its isomorphism class and say that $[M,\phi]$ is a {\it deformation} of $V$ over $R$. We denote by $\Def_\A(V,R)$ the 
set of all deformations of $V$ over $R$. The {\it deformation functor } over $V$ is the 
covariant functor $\hat{\Fun}_V:\hat{\Ca}\to \Sets$ defined as follows: for all objects $R$ in $\hat{\Ca}$ define $\hat{\Fun}_V(R)=\Def_\A(V,R)$ and for all morphisms $\alpha:R\to 
R'$ in 
in $\hat{\Ca}$ 
let $\hat{\Fun}_V(\alpha):\Def_\A(V,R)\to \Def_\A(V,R')$ be defined as $\hat{\Fun}_V(\alpha)([M,\phi])=[R'\otimes_{R,\alpha}M,\phi_\alpha]$, where $\phi_\alpha: \k\otimes_{R'}
(R'\otimes_{R,\alpha}M)\to V$ is the composition of $\A$-module isomorphisms 
\[\k\otimes_{R'}(R'\otimes_{R,\alpha}M)\cong \k\otimes_RM\xrightarrow{\phi} V.\]  

Suppose there exists an object $R(\A,V)$ in $\hat{\Ca}$  and a deformation $[U(\A,V), \phi_{U(\A,V)}]$ of $V$ over $R(\A,V)$ with the following property. For each $R$ in $\hat{\Ca}$ 
and for all lifts $M$ of $V$ over $R$ there exists a morphism $\upsilon:R(\A,V)\to R$ in $\hat{\Ca}$ such that 
\[\hat{\Fun}_V(\upsilon)[U(\A,V), \phi_{U(\A,V)}]=[M,\phi],\]
and moreover $\upsilon$ is unique if $R$ is the ring of dual numbers $\k[[t]]]/(t^2)$.  Then $R(\A,V)$ and $[U(\A,V),\phi_{U(\A,V)}]$ are respectively called  the {\it versal deformation 
ring} and {\it versal deformation} of $V$. If the morphism $\upsilon$ is unique for all $R$ in $\hat{\Ca}$ and lifts $(M,\phi)$ of $V$ over $R$, then $R(\A,V)$ and $[U(\A,V),
\phi_{U(\A,V)}]$ are respectively called the {\it universal deformation ring} and the {\it universal deformation} of $V$.  In other words, the universal deformation ring $R(\A,V)$ 
represents the deformation functor $\hat{\Fun}_V$ in the sense that $\hat{\Fun}_V$ is naturally isomorphic to the $\Hom$ functor $\Hom_{\hat{\Ca}}(R(\A,V),-)$. Using Schlessinger's 
criteria \cite[Thm. 2.11]{sch} and using methods similar to those in \cite{mazur}, it is straightforward to prove that the deformation functor $\hat{\Fun}_V$ is continuous, that every finitely 
generated $\A$-module $V$ has a versal deformation ring and that this versal deformation is universal provided that the endomorphism ring $\End_\A(V)$ is isomorphic to $\k$ (see 
\cite[Prop. 2.1]{blehervelez}). 

Recall that the $\k$-algebra $\A$ is said to be self-injective if the regular left $\A$-module $_\A\A$ is injective and that $\A$ is called a Frobenius algebra provided that the right $\A$-
modules $\A_\A$ and $({_\A}\A)^\ast=\Hom_\k({_\A}\A,\k)$ are isomorphic. Recall also that $\A$ is said to be a symmetric algebra provided that $\A$ is a Frobenius algebra and 
there exists a non-degenerate associative bilinear form $\theta: \A\times \A\to \k$ with $\theta(a,b)=\theta(b,a)$ for all $a,b\in \A$. By \cite
[Prop. 9.9]{curtis}, every Frobenius algebra is self-injective. 

\begin{remark}\label{rem1}
If $\A$ is self-injective and $(M,\phi)$ is a lift of $V$ 
over an object $R$ in $\hat{\Ca}$ with $\SEnd_\A(V)\cong \k$, then the deformation $[M,\phi]$ does not depend on the particular choice of the $\A$-module isomorphism. More 
precisely, if $f:M\to M'$ is an $R\otimes_\k \A$-module isomorphism with $(M',\phi')$ a lift of $V$ over $R$, then there exists an $R\otimes_\k\A$-module isomorphism $\bar{f}:M\to M'$ 
such that $\phi'\circ (\mathrm{id}_\k\otimes _R\bar{f})=\phi$. In other words, $[M,\phi]=[M',\phi']$ in $\hat{\Fun}_V(R)=\Def_\A(V,R)$ 
(see \cite[Thm. 2.6] {blehervelez}). 
\end{remark}

We denote the first syzygy of  $V$ by $\Omega V$, i.e., $\Omega V$ is the kernel of a projective cover $P_V\to V$, (see e.g., \cite[pp. 124-126]{auslander}).  
\begin{example}\label{exam1}
Let $G$ be a finite group and consider the group algebra $\k G$, which is a self-injective $\k$-algebra (see e.g., \cite[Prop. 3.1.2]{benson} and \cite[Prop. 9.6]{curtis}).  It was proved in 
\cite{bleher2} that if $V$ is a finitely generated $\k G$-module whose stable endomorphism ring is isomorphic to $\k$ then $V$ has a universal deformation ring $R(\k G, V)$. Moreover, 
the stable 
endomorphism ring of $\Omega V$ is also isomorphic to $\k$ and  the universal deformation rings $R(\k G,V)$ and  
$R(\k G,\Omega V)$ of $V$ and $\Omega V$, respectively, are isomorphic.
\end{example}

The following result generalizes the properties of universal deformation rings mentioned in Example \ref{exam1} to arbitrary Frobenius $\k$-algebras (see \cite[Thm. 2.6]
{blehervelez}).

\begin{theorem}\label{thm3}
Let $\A$ be a finite dimensional self-injective $\k$-algebra, and suppose that $V$ is a finitely generated $\A$-module whose stable endomorphism ring $\SEnd_\A(V)$ is isomorphic 
to $\k$.
\begin{itemize}
\item[\textup{(i)}] The module $V$ has a universal deformation ring $R(\A,V)$.
\item[\textup{(ii)}] If $P$ is a finitely generated projective $\A$-module, then $\SEnd_\A(V\oplus P)\cong \k$ and $R(\A,V)\cong R(\A,V\oplus P)$.
\item[\textup{(iii)}] If $\A$ is also a Frobenius algebra, then $\SEnd_\A(\Omega V)\cong \k$ and $R(\A,V)\cong R(\A,\Omega V)$. 
\end{itemize}
\end{theorem}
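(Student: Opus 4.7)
The plan is to handle the three parts in order, leveraging in each case the rigidity afforded by the vanishing of nontrivial stable self-maps.

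For (i), the strategy is to apply Schlessinger's criteria to the covariant functor $\hat{\Fun}_V$. Continuity and the existence of a versal hull are recalled in the paragraph preceding the theorem, so the only substantive task is verifying Schlessinger's uniqueness condition $H_4$: that the natural map
\[
\hat{\Fun}_V(R_1 \times_{R_0} R_2) \to \hat{\Fun}_V(R_1) \times_{\hat{\Fun}_V(R_0)} \hat{\Fun}_V(R_2)
\]
is bijective (not merely surjective) for every small extension $R_1 \to R_0$. Equivalently, one must show that the morphism $\upsilon: R(\A,V) \to R$ classifying a given lift is unique for every $R$ in $\hat{\Ca}$, not merely for the ring of dual numbers. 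In the classical setting of $\End_\A(V)\cong\k$ this is Mazur's argument; under the weaker hypothesis $\SEnd_\A(V)\cong\k$, the required rigidity is supplied by Remark \ref{rem1}. Self-injectivity of $\A$ enters because it makes $\End_\A(V)=\k\cdot\mathrm{id}_V + \PEnd_\A(V)$ a local ring and guarantees that endomorphisms factoring through projective $\A$-modules can be lifted to endomorphisms of any lift that factor through $R\otimes_\k\A$-projectives.

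For (ii), the equality $\SEnd_\A(V\oplus P)\cong\k$ is immediate from $\underline{\Hom}_\A(-,P)=0=\underline{\Hom}_\A(P,-)$. To obtain $R(\A,V)\cong R(\A,V\oplus P)$ I would construct mutually inverse natural transformations between the deformation functors $\hat{\Fun}_V$ and $\hat{\Fun}_{V\oplus P}$: in one direction, send $[M,\phi]$ to $[M\oplus (R\otimes_\k P),\phi\oplus\mathrm{can}]$; in the other, lift the idempotent in $\End_\A(V\oplus P)$ projecting onto $P$ to an idempotent in the complete local ring $\End_{R\otimes_\k\A}(M')$ attached to a lift $M'$ of $V\oplus P$. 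The resulting decomposition is well-defined up to isomorphism by the rigidity established in (i), and the summand corresponding to $P$ is forced to be isomorphic to $R\otimes_\k P$ because projective lifts of $P$ over $R$ are unique up to isomorphism when $\A$ is self-injective.

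For (iii), the Frobenius hypothesis guarantees that for every $R$ in $\hat{\Ca}$ the algebra $R\otimes_\k\A$ is again Frobenius over $R$, so every finitely generated projective $R\otimes_\k\A$-module is free as an $R$-module. Given a lift $(M,\phi)$ of $V$ over $R$, choose a projective cover $Q\surjection M$ in $R\otimes_\k\A$-modules and let $N$ be its kernel; reducing modulo $\m_R$ yields a projective cover of $V$, so $N/\m_R N\cong \Om V$, and freeness of $Q$ over $R$ together with the Frobenius duality forces $N$ itself to be free over $R$. Thus $(N,\bar\phi)$ is a lift of $\Om V$, and after checking that this assignment is functorial in $R$ and independent of the chosen projective cover (both of which follow from uniqueness of projective covers up to isomorphism and from the fact that $R'\otimes_R(-)$ preserves projective covers along morphisms $R\to R'$ in $\hat{\Ca}$), one obtains a natural isomorphism $\hat{\Fun}_V\cong \hat{\Fun}_{\Om V}$ and hence $R(\A,V)\cong R(\A,\Om V)$. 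The main obstacle is verifying that $N$ is free, not merely flat, over $R$: it is precisely here that the Frobenius assumption---via the $\A$-bimodule isomorphism $\Hom_\k(\A,\k)\cong\A$---is needed rather than bare self-injectivity.
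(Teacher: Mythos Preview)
The paper does not prove Theorem~\ref{thm3}; it is stated with a reference to \cite[Thm.~2.6]{blehervelez} and no argument is supplied here, so there is no proof in this paper against which to compare. Your outline follows the expected strategy of that reference.

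One correction to part (iii): the freeness of $N$ over $R$ does not require the Frobenius hypothesis. Since $M$ is $R$-free, the sequence $0\to N\to Q\to M\to 0$ splits over $R$, and $Q$ is $R$-free because any finitely generated projective $R\otimes_\k\A$-module is a summand of a free one, hence $R$-projective, hence $R$-free over the local ring $R$. The Frobenius hypothesis is actually needed to show that the natural transformation $\hat{\Fun}_V\to\hat{\Fun}_{\Om V}$ you build is a \emph{bijection}: the inverse, via $\Om^{-1}$ (cosyzygies, i.e.\ injective envelopes) over $R\otimes_\k\A$, requires that injective and projective $R\otimes_\k\A$-modules coincide, and this is precisely what the Frobenius condition---as opposed to mere self-injectivity---guarantees after base change to $R$. (A small terminological point: Frobenius only demands a one-sided isomorphism $\A_\A\cong({_\A}\A)^\ast$; the bimodule isomorphism you invoke is the stronger symmetric condition.)
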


\section{Some Remarks about the Representation Theory of $\Ar$ and classification of $\Ar$-modules whose endomorphism ring is isomorphic to $\k$}\label{section3}
For the remainder of this article, let $\k$ be an algebraically closed field of arbitrary characteristic and let $\Ar=\k Q/I_{\bar{r}}$ as in (\ref{algebra}). We identify the vertices of $Q$ with 
elements of  $\mathbb{Z}/3$ (the cyclic group of three elements).

The algebra $\Ar$ is one of the algebras of dihedral type studied by {\sc K. Erdmann} in \cite{erdmann}. In particular, $\Ar$ is a symmetric $\k$-
algebra. However, by \cite[Lemma IX.5.4]{erdmann}, $\Ar$ is not Morita equivalent to a block of a group algebra. Since $\Ar$ is a special biserial algebra, all the 
non-projective indecomposable $\Ar$-modules can be described combinatorially as so-called strings and bands modules (see \cite{buri}). In this article, we are only concerned about 
these string modules, which are described as follows.

\subsection{String modules for $\Ar$}
\label{ape1}
Given each arrow $\zeta_0, \tau_0,\zeta_1,\tau_1,\zeta_2,\tau_2$ of $Q$, we define a formal inverse by $\zeta_0^{-1}$, $\tau_0^{-1}$, $\zeta_1^{-1}$, $\tau_1^{-1}$, $\zeta_2^
{-1}$, $\tau_2^{-1}$, 
respectively. Let $\text{\bf s}(\zeta_0)=0=\text{\bf s}(\zeta_0^{-1})$, $\text{\bf s}(\tau_0)=0=\text{\bf s}(\tau_2^{-1})$, $\text{\bf s}(\zeta_1)=1=\text{\bf s}(\zeta_1^{-1})$, $\text{\bf s}
(\tau_1)=1=
\text{\bf s}(\tau_0^{-1})$, $\text{\bf s}(\zeta_2)=2=\text{\bf s}(\zeta_2^{-1})$ and $\text{\bf s}(\tau_2)=2=\text{\bf s}(\tau_1^{-1})$. Let $\text{\bf e}(\zeta_0)=0=\text{\bf e}(\zeta_0^{-1})$, $
\text{\bf e}
(\tau_0)=1=\text{\bf e}(\tau_1^{-1})$, $\text{\bf e}(\zeta_1)=1=\text{\bf e}(\zeta_1^{-1})$, $\text{\bf e}(\tau_1)=2=\text{\bf s}(\tau_2^{-1})$, $\text{\bf e}(\zeta_2)=2=\text{\bf e}(\zeta_2^
{-1})$ and $\text{\bf e}
(\tau_2)=0=\text{\bf e}(\tau_0^{-1})$. By a {\it word} of length $n\geq 1$ we mean a sequence $w_n\cdots w_1$, where the $w_j$ is either an arrow or a formal inverse of an arrow 
and where $\text{\bf s}(w_{j+1})=\text{\bf e}(w_j)$ for  $1\leq j \leq n-1$. We define $(w_n\cdots w_1)^{-1}={w_1}^{-1}\cdots {w_n}^{-1}$, $\text{\bf s}(w_n\cdots w_1)=\text{\bf s}
(w_1)$ and $\text{\bf e}(w_n\cdots w_1)=\text{\bf e}(w_n)$.  If $i\in \{0,1,2\}\mod 3$ is a vertex of $Q$, we define an empty word $\1_i$ of length zero with $\text{\bf e}(\1_i)=i=\text{\bf s}
(\1_i)$ and $
(\1_i)^{-1}=\1_i$. 
For all $i\in \{0,1,2\}\mod 3$ there exists a path in $Q$ of length $3$ starting and ending at $i$, namely   
\begin{equation}\label{projec}
\underline{c}_i=\tau_{i+2}\tau_{i+1}\tau_i.
\end{equation}
Denote by $\mathcal{W}$ the set of all words and let 
\[J=\{\zeta_0^{r_0},\zeta_1^{r_2},\zeta_2^{r_3}, \tau_0\zeta_0,\zeta_1\tau_0,\tau_1\zeta_1, 
\zeta_2\tau_1,\tau_2\zeta_2,\zeta_0\tau_2, \underline{c}_0^k, \underline{c}_1^k, \underline{c}_2^k\}.\] 
Let $\sim$ be the equivalence relation on $\mathcal{W}$ defined by  $w\sim w'$ if and only if $w=w'$ or $w^{-1}
=w'$. A \textit{string} is a representative $C$ of an equivalence class under the relation $\sim$  where either  $C=\1_i$ for some vertex $i$ of $Q$, or $C=w_n\cdots w_1$ with $n
\geq 1$ and $w_j\not=w_{j+1}^{-1}$ for $1\leq j\leq  n-1$ and no sub-word of $C$ or its formal inverse belong to $J$. If $C$ is a string such that ${\bf s}(C)={\bf e}(C)$, then we let 
$C^0=\1_{{\bf e}(C)}$. If $C=w_n\cdots w_1$ and $D=v_m\cdots v_1$ are strings of 
length $n,m\geq 1$, respectively, we say that the composition of $C$ and $D$ is defined provided that $w_n\cdots w_1v_m\cdots v_1$ is a string, and write $CD=w_n\cdots 
w_1v_m\cdots v_1$; we say that the composition of $C$ with $\1_i$ is defined provided $\text{\bf s}(C)=i$ (respectively, $\text{\bf e}(C)=i$), and in this case we have $C
\1_i\sim C$ (respectively, $\1_iC \sim C$). In particular, if $C=w_n\cdots w_1$ is a string of length $n\geq 1$ then $C\sim w_n\cdots w_{j+1}\1_{\text{\bf e}(w_j)}w_j\cdots w_1$ for 
all $1\leq j
\leq n-1$. 
If $C=w_n\cdots w_1$ is a string of length $n\geq 1$ then there exists and indecomposable $\Ar$-module $M[C]$, called the {\it string 
module} corresponding to the string representative $C$, which can be  described as follows. There is an ordered $\k$-basis $\{z_0,z_1,\ldots,z_n\}$ of $M[C]$ such that the action of $
\Ar$ 
on $M[C]$ is given by the following representation $\varphi_C:\Ar\to \Mat(n+1,\k)$. Let $\text{\bf v}(j)=\text{\bf e}(w_j)$ for $0\leq j\leq n-1$ and $\text{\bf v}(n)=\text{\bf s}
(w_n)
$. Then for each vertex $i\in \{0,1,2\}\mod3$ and for each arrow $\zeta\in \{\zeta_0, \tau_0,\zeta_1, \tau_1,\zeta_2,\tau_2\}$ in $Q$ and for all $0\leq j\leq n$ define
\begin{align}
\varphi_C(i)(z_j)=\begin{cases}
z_j, &\text{ if $\text{\bf v}(j)=i$}\\
0, &\text{ otherwise}
\end{cases}
&&
\text{ and }
&&
\varphi_C(\zeta)(z_j)=\begin{cases}
z_{j-1}, &\text{ if $w_j=\zeta$}\\
z_{j+1}, &\text{ if $w_{j+1}=\zeta^{-1}$}\\
0, &\text{ otherwise}
\end{cases}
\end{align}

We call $\varphi_C$ the {\it canonical representation} and $\{z_0,z_1,\ldots,z_n\}$ a {\it canonical $\k$-basis} for $M[C]$ relative to the string representative $C$. Note that $M[C]
\cong M
[C^{-1}]$.  If $C=\1_i$ with $i\in \{0,1,2\}\mod 3$ 
then $M[C]$ is the simple $\Ar$-module corresponding to vertex $i$. We denote the simple $\Ar$-modules corresponding to the vertices $0$, $1$ and $2$ of $Q$ 
by $M[\1_0]$, $M[\1_1]$ and $M[\1_2]$, 
respectively.

\subsection{The stable Auslander-Reiten quiver of $\Ar$}
\label{ape2}

We denote by $\Gamma_s(\Ar)$ the stable Auslander-Reiten quiver of $\Ar$ (see \cite[VII]{auslander}).
For all $i\in \{0,1,2\}\mod 3$ there exists a path in $Q$ of length $3$ starting and ending at $i$, namely   
\begin{equation}\label{projec}
\underline{c}_i=\tau_{i+2}\tau_{i+1}\tau_i.
\end{equation}

The components $\mathfrak{C}$ of  $\Gamma_s(\Ar)$ consisting of string $\Ar$-modules are two $3$-tubes and infinitely many non-periodic components of type $\mathbb{ZA}_
\infty^\infty$.  In the following, we describe the irreducible morphism between string $\Ar$-modules. 

Assume $C=w_nw_{n-1}\cdots w_1$ with $n\geq 1$ is a string. We say that $C$ 
is \textit{directed} if all $w_j$ are arrows 
and  we say that $C$ is a {\it maximal directed string} if $C$ is directed and if for any arrow $\zeta$ in $Q$, $\zeta C\in J$.
Let $\mathcal{M}$ be the set of all maximal directed strings, i.e.,
\[\mathcal{M}=\{\zeta_0^{r_0-1},\zeta_1^{r_1-1},\zeta_2^{r_2-1},\underline{c}_0^{k-1}\tau_2\tau_1,
\underline{c}_1^{k-1}\tau_0\tau_2,\underline{c}_2^
{k-1}\tau_1\tau_0\}.\]

Let $C$ be a string. We say that $C$ {\it starts on a peak} (respectively, {\it starts in a deep}) provided that there is no arrow $\zeta$ in $Q$ such that $C\zeta$ (respectively,  $C
\zeta^{-1}$) is a string;  we also say that $C$ {\it ends 
in a peak} (respectively, {\it ends in a deep}) provided that there is no arrow $\gamma$ in $Q$ such that  $\gamma^{-1}C$ (respectively, $\gamma C$) is a string. If $C$ is a string 
not starting on a peak (respectively, not starting in a deep) , say $C\zeta$ (respectively, $C\zeta^{-1}$) is a string for some arrow $\zeta$ then there is a unique directed string $D\in 
\mathcal{M}$ such that $C_h=C\zeta D^{-1}$  (respectively, $C_c=C\zeta^{-1}D$) is a string. We say $C_h$ (respectively, $C_c$) is obtained from $C$ by adding a {\it hook} 
(respectively, a {\it co-hook}) on the right side.
Dually,  if $C$ is a string not ending on a peak (respectively, not ending in a deep), say $\gamma^{-1} C$ (respectively, $\gamma C$) is a string for some arrow $\gamma$ in $Q$ 
then there is a unique directed string $E\in \mathcal{M}$ such that $_hC=E\gamma^{-1} C$  (respectively, $_cC=E^{-1}\gamma D$) is a string. We say $_hC$ (respectively, $_cC$) 
is obtained from $C$ by adding a {\it hook} (respectively, a {\it co-hook}) on the left side. By \cite{buri}, all irreducible morphisms between string modules are either canonical 
injections $M[C]\to M[C_h]$, $M[C]\to M[{_hC}]$, or canonical projections $M[C_c]\to M[C]$, $M[{_cC}]\to M[C]$. 
Suppose $M[C]$ is a string module of minimal length such that $M[C]$ belongs to a component $\mathfrak{C}$ of $\Gamma_s(\Ar)$ of type $\mathbb{ZA}_\infty^
\infty$. 
\begin{figure}[ht]
$
\begindc{\commdiag}[10]
% First row
\obj(-12,10)[a]{$\vdots$}
\obj(-4,10)[1]{$\vdots$}
\obj(4,10)[2]{$\vdots$}
\obj(12,10)[b]{$\vdots$}
%End First Row
% Second row
\obj(-12,7)[3]{$\cdots$}
\obj(-8,7)[4]{$M[C_{cc}]$}
\obj(0,7)[5]{$M[_hC_{c}]$}
\obj(8,7)[6]{$M[_{hh}C]$}
\obj(12,7)[7]{$\cdots$}
%End Second Row

%Third Row
\obj(-12,4)[8]{$\cdots$}
\obj(-4,4)[9]{$M[C_{c}]$}
\obj(4,4)[10]{$M[_hC]$}
\obj(12,4)[11]{$\cdots$}
%End Third Row

% Fourth Row
\obj(-12,1)[12]{$\cdots$}
\obj(-8,1)[13]{$M[_cC_{c}]$}

\obj(0,1)[14]{$M[C]$}
\obj(8,1)[15]{$M[_hC_h]$}
\obj(12,1)[16]{$\cdots$}
%End Fourth Row

%Fifth Row
\obj(-12,-2)[17]{$\cdots$}
\obj(-4,-2)[18]{$M[_cC]$}
\obj(4,-2)[19]{$M[C_h]$}
\obj(12,-2)[20]{$\cdots$}
%End Fifth Row

%Sixth Row
\obj(-12,-5)[21]{$\cdots$}
\obj(-8,-5)[22]{$M[_{cc}C]$}
\obj(0,-5)[23]{$M[_cC_h]$}
\obj(8,-5)[24]{$M[C_{hh}]$}
\obj(12,-5)[25]{$\cdots$}
%End Sixth Row

%Seventh Row
\obj(-12,-8)[26]{$\vdots$}
\obj(-4,-8)[27]{$\vdots$}
\obj(4,-8)[28]{$\vdots$}
\obj(12,-8)[29]{$\vdots$}
%End Seventh Row

%Morphisms
\mor{a}{4}{}
\mor{4}{1}{}
\mor{1}{5}{}
\mor{5}{2}{}
\mor{2}{6}{}
\mor{6}{b}{}
\mor{8}{4}{}
\mor{4}{9}{}
\mor{9}{5}{}
\mor{5}{10}{}
\mor{10}{6}{}
\mor{6}{11}{}
\mor{8}{13}{}
\mor{13}{9}{}
\mor{9}{14}{}
\mor{14}{10}{}
\mor{10}{15}{}
\mor{15}{11}{}
\mor{17}{13}{}
\mor{13}{18}{}
\mor{18}{14}{}
\mor{14}{19}{}
\mor{19}{15}{}
\mor{15}{20}{}
\mor{17}{22}{}
\mor{22}{18}{}
\mor{18}{23}{}
\mor{23}{19}{}
\mor{19}{24}{}
\mor{24}{20}{}
\mor{26}{22}{}
\mor{22}{27}{}
\mor{27}{23}{}
\mor{23}{28}{}
\mor{28}{24}{}
\mor{24}{29}{}
\enddc
$
\caption{The stable Auslander-Reiten component near $M[C]$.}\label{component}
\end{figure}
Since none of the projective $\Ar$-modules is uniserial then near $M[C]$ the component $\mathfrak{C}$ looks as in Figure \ref{component}.

\subsection{Homomorphisms between string modules for $\Ar$}
\label{ape3}
Let $S$ and $T$ be strings for $\Ar$. Suppose $C$ is a substring of both $S$ and $T$ such that the following conditions (i) and (ii) are satisfied.
\begin{itemize}
\item[(i)] $S\sim BCD$, where $B$ is a substring which is either of length zero or $B=B'\zeta$ for an arrow $\zeta$, and $D$ is a substring which is either of length zero or $D=
\gamma^{-1}D'$ for an arrow $\gamma$, i.e., $S\sim B'\xleftarrow{\tau}C\xrightarrow{\varphi} D'$.
\item[(ii)] $T\sim ECF$, where $E$ is a substring which is either of length zero or $E=E'\epsilon^{-1}$ for an arrow $\epsilon$, and $F$ is a substring which is either of length zero or 
$F=\mu F'$ for an arrow $\mu$, i.e., $T\sim E'\xrightarrow{\epsilon} C\xleftarrow{\mu} F'$.
\end{itemize}
Then by \cite{krause} there exists a composition of  $\Ar$-module homomorphisms
\begin{equation}\label{canhom}
\sigma_C:M[S]\surjection M[C]\injection M[T].
\end{equation}
We call $\sigma_C$ a {\it canonical homomorphism} from $M[S]$ to $M[T]$ that factors through $M[C]$. It follows from \cite{krause} that each $\Ar$-module homomorphism 
from $M[S]$ to $M[T]$ can be written uniquely as a $\k$-linear combination of canonical $\Ar$-module homomorphisms as in (\ref{canhom}). In particular, if $M[S]=M[T]$ 
then the canonical endomorphisms generate $\End_{\Ar}(M[S])$.

\subsection{Projective Indecomposable $\Ar$-modules and modules whose endomorphism ring is isomorphic to $\k$}
\label{3}
For all $i\in \{0,1,2\}\mod 3$ vertex of $Q$, the radical series of the projective indecomposable $\Ar$-module $P_i$ can be described as in the following figure.
\begin{align}\label{projec}
P_i=\begindc{\commdiag}[15]
\obj(-3,5)[v1]{$M[\1_i]$}
\obj(-6,3)[v2]{$M[\1_{i+1}]$}
\obj(-6,0)[v3]{$M[\1_{i+2}]$}
\obj(-6,-3)[v4]{$M[\1_i]$}
\obj(-3,-5)[v5]{$M[\1_i]$}
\obj(0,3)[v6]{$M[\1_i]$}
\obj(0,-3)[v7]{$M[\1_i]$}
\mor{v1}{v2}{$\tau_i$}[-1,0]
\mor{v2}{v3}{$\tau_{i+1}$}[-1,0]
\mor{v3}{v4}{$\tau_{i+2}$}[-1,0]
\mor{v4}{v5}{$\underline{c}_i^{k-1}$}[-1,1]
\mor{v1}{v6}{$\zeta_i$}[1,0]
\mor{v6}{v7}{$\zeta_i^{r_i-2}$}[1,1]
\mor{v7}{v5}{$\zeta_i$}[1,0]
\enddc
\end{align}

The following result provides a classification of all $\Ar$-modules whose endomorphism ring is isomorphic to $\k$. 

\begin{proposition}\label{prop2}
Let $M[S]$ be a string $\Ar$-module, where $\bar{r}=(r_0,r_1,r_2,k)$ and $r_0,r_1,r_2\geq 2$, $k\geq 1$. Then $M[S]$ has endomorphism ring isomorphic to $\k$ if and 
only if for some $i\in \{0,1,2\}
\mod 3$ the string representative $S$ is equivalent either to $\1_i$, or to $\tau_i$, or to $\tau_{i+1}\tau_i$. 
\end{proposition}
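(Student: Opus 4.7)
The combinatorial description of morphisms between string modules recalled in \S\ref{ape3} (due to Krause) provides a $\k$-basis of $\End_{\Ar}(M[S])$ consisting of canonical endomorphisms $\sigma_C$ as in $(\ref{canhom})$, one for each substring $C$ of $S$ equipped with a decomposition $S=BCD$ satisfying condition~(i) and a decomposition $S=ECF$ satisfying condition~(ii), where the two occurrences of $C$ inside $S$ need not coincide. The identity arises from $C=S$ with $B,D,E,F$ empty; consequently $\End_{\Ar}(M[S])\cong\k$ if and only if no proper substring of $S$ admits such an admissible pair of decompositions. The plan is to verify this dichotomy by direct inspection for the three listed strings and by a short case analysis for every other string.

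\textbf{``If'' direction.} For $S\in\{\1_i,\tau_i,\tau_{i+1}\tau_i\}$, the walk underlying $S$ visits each vertex of $Q$ at most once and $S$ contains no inverse letters, so each proper substring $C$ of $S$ has a unique occurrence. At this occurrence, condition~(i) demands that the letter of $S$ immediately to the right of $C$ be an inverse arrow or that $C$ touch the right boundary, and condition~(ii) demands the symmetric property on the left. Since all letters of $S$ are direct arrows, in every case one of these conditions fails; the only admissible substring is $C=S$ itself, giving $\End_{\Ar}(M[S])\cong\k$.

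\textbf{``Only if'' direction.} Suppose $S$ is not equivalent to any of the three listed strings. Because two consecutive $\tau$-letters in a string of $\Ar$ must compose without immediate cancellation, a string with no loop letters $\zeta_j^{\pm 1}$ must have all of its $\tau$-letters of the same orientation, so up to the equivalence $S\sim S^{-1}$ it takes the form $\tau_{j+n-1}\cdots\tau_j$ for some $j$ and some $n\ge 1$. Thus either $S$ contains a loop letter $\zeta_j^{\pm 1}$ (Case~B), or $S$ is directed of length $n\ge 3$ (Case~C). In Case~B, a loop letter at some word-position $p$ makes the vertex $j$ appear at the two consecutive walk positions $p-1$ and $p$, and the differing adjacencies at these two occurrences of $\1_j$ allow one to satisfy~(i) at one occurrence and~(ii) at the other, yielding a non-identity canonical endomorphism $\sigma_{\1_j}$. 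In Case~C, setting $\ell=n-3$ and letting $C$ be the rightmost $\ell$-letter sub-word of $S$, the cyclic structure of $Q$ forces the leftmost $\ell$-letter sub-word of $S$ to coincide with $C$; then~(i) is realized at the rightmost occurrence of $C$ (with $D$ empty and $B$ ending in a direct arrow) and~(ii) at the leftmost occurrence (with $E$ empty and $F$ beginning with a direct arrow). In either case $\End_{\Ar}(M[S])\not\cong\k$.

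\textbf{Main obstacle.} The main subtlety is the bookkeeping in Case~B when the loop letter is in the interior of $S$: one must check that the two candidate occurrences of $\1_j$ realize complementary direct-versus-inverse adjacency patterns. A brief sub-case analysis according to whether the loop letter sits at an endpoint or the interior of $S$, and according to the nature of its neighboring letters, disposes of this point and completes the argument.
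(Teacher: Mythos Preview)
Your ``if'' direction and Case~C are correct and match the paper's argument. The gap is in Case~B.

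You claim that for a loop letter $w_p=\zeta_j$, the two adjacent walk positions $p-1$ and $p$ furnish the pair of decompositions needed for a canonical endomorphism $\sigma_{\1_j}$. Concretely, with $w_p=\zeta_j$ direct, condition~(i) at position $p-1$ requires $w_{p-1}$ to be an inverse letter (or $p=1$), and condition~(ii) at position $p$ requires $w_{p+1}$ to be an inverse letter (or $p=n$). Neither is automatic. Take $S=\zeta_j^2$ (a valid string whenever $r_j\ge 3$): for $p=1$ one gets~(i) at position $0$ but~(ii) fails at position $1$ since $w_2=\zeta_j$ is direct; for $p=2$ one gets~(ii) at position $2$ but~(i) fails at position $1$ since $w_1=\zeta_j$ is direct. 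So no single $p$ produces both conditions at the consecutive positions $p-1,p$, and the ``brief sub-case analysis'' you defer to would not dispose of this---it would instead force you to change the pair of occurrences.

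The fix, which is exactly what the paper does, is to pass to a \emph{maximal} block: choose $m$ maximal with $\zeta_j^{m}$ (equivalently $\zeta_j^{-m}$) occurring as a substring of $S$. Maximality, together with the relations $\tau_j\zeta_j,\ \zeta_j\tau_{j+2}\in J$ and the no-cancellation rule, forces the letter immediately to the left of the block (if any) to be $\tau_{j+2}^{-1}$ and the letter immediately to the right (if any) to be $\tau_j^{-1}$, both inverse. Hence~(i) holds at one end of the block and~(ii) at the other, yielding the non-identity endomorphism through $M[\1_j]$. Once Case~B is repaired in this way, your argument coincides with the paper's.
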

\begin{proof}
If $S$ is equivalent either to one of the strings $\1_0,\1_1$, or to $\1_2$, then it follows from Schur's Lemma that $\End_\A(M[S])\cong \k$. If $S$ is either equivalent to one of the 
strings $\tau_0$, $\tau_1$, $\tau_2$, $\tau_1\tau_0$, $\tau_2\tau_1$, or to $\tau_0\tau_2$ then the only canonical endomorphism in $\End_{\A_
{\bar{r}}}(M[S])$ is the identity homomorphism, which implies that $\End_{\Ar}(M[S])$ is one-dimensional over $\k$.
Next assume that $M[S]$ is a string $\Ar$-module with endomorphism ring isomorphic to $\k$. Let denote by $n$ the length of $S$. If $n=0$ then $S$ is equivalent 
either to  $\1_0$, or to $\1_1$, or  to $\1_2$. If $n=1$ then $S$ is equivalent to an arrow. By hypothesis, $S$ is equivalent neither to $\zeta_0$, nor to $
\zeta_1$, nor to $\zeta_2$, for otherwise $\dim_\k\End_{\Ar}(M[S])\geq 2$. This implies that $S$ is equivalent either to $\tau_0$, or to $\tau_1$,  or to $\tau_2$. For the remainder of the 
proof, assume that $n\geq 2$ and let $m$ be maximal such that  the string representative $S$ contains a substring equivalent to $\zeta_i^{-m}$ for some $i\in \{0,1,2\}\mod 3$, and put 
$m=0$ provided that $S$ does not contain as substring any of the strings $\zeta_0$, $\zeta_1$, $\zeta_2$ or any of their formal inverses.  If $m>0$ then there exist suitable strings $D
$ and $D'$ such that $S\sim D\zeta_i^{-m}D'$. It follows from the maximality of $m$ that the string $\zeta_i^{-m}$ starts in a deep and ends on a peak. Therefore, there exists a non-
trivial canonical 
endomorphism of $M[S]$ factoring through $M[\1_i]$ implying that $\dim_\k\End_{\Ar}(M[S])\geq 2$, which contradicts our hypothesis. Thus $m=0$, implying that 
$S$ does not contain as substrings the arrows $\zeta_0$, $\zeta_1$, or $\zeta_2$ or any of their formal inverses. Thus, there exist $i\in \{0,1,2\}\mod 3$ and an integer $l\in 
\{0,\ldots,k-1\}$ such that either $S\sim \underline{c}_i^l\tau_{i+2}$ or $S\sim \underline{c}_i^l\tau_{i+2}\tau_{i+1}$. 
If $l=0$ then $S$ is equivalent either to $\tau_1\tau_0$, or to $\tau_2\tau_1$, or to $\tau_0\tau_2$. Assume then that $l>0$. If $S\sim 
\underline{c}_i^l\tau_{i+2}$ (respectively, $S\sim \underline{c}_i^l\tau_{i+2}\tau_{i+1}$) then there exists a non-trivial canonical endomorphism of $M[S]$ factoring through $M[\tau_
{i+2}]$ (respectively, through $M[\tau_{i+2}\tau_{i+1}]$) implying that $\dim_\k\End_{\Ar}(M[S])\geq 2$, contradicting again our hypothesis. This finishes the proof of Proposition 
\ref{prop2}.
\end{proof}

\section{Components of $\Gamma_s(\Ar)$ of type $\mathbb{ZA}_\infty^\infty$ containing a module whose endomorphism ring is isomorphic to $\k$ and $3$-tubes}
\label{section4}
For all $i\in \{0,1,2\}\mod 3$ we define:
\begin{align}
\underline{a}_i=\tau_i\zeta_i^{-r_i+1} && \underline{b}_i=\underline{c}_{i+2}^{k-1}\tau_{i+1}\tau_i\zeta_i^{-1}
\end{align}

\subsection{Components of $\Gamma_s(\Ar)$ of type $\mathbb{ZA}_\infty^\infty$ containing a module whose endomorphism ring is isomorphic to $\k$}

\begin{proposition}\label{prop4}
For $i\in \{0,1,2\}\mod 3$, let $\mathfrak{A}_i$ be the component of the stable Auslander-Reiten quiver of $\Ar$ containing the simple $\Ar$-module $M[\1_i]$, 
where $\bar{r}=(r_0,r_1,r_2,k)$ and $r_0,r_1,r_2\geq 2$, $k\geq 1$. 
Define 
\begin{align*}
(\1_i)_h=\underline{a}_{i+2}&&\text{ and }&&(\1_i)_{hh}=\underline{a}_{i+2}\underline{a}_{i+1}.
\end{align*}
The component $\mathfrak{A}_i$ is $\Omega$-stable if and only if for $r_i=2$. If $k=1$ then the module $M[\tau_{i+1}]$ lies in $\Omega(\mathfrak{A}_i)$. The modules in $
\mathfrak{A}_i\cup \Omega(\mathfrak{A}_i)$ whose stable endomorphism rings are isomorphic to $\k$ are precisely the modules in $\Omega$-orbits of the modules $U_0=M[\1_i]$, 
$U_1=M[(\1_i)_h]$ and $U_2=M[(\1_i)_{hh}]$. Their universal deformation rings are 
\begin{align*}
R(\Ar,U_0)\cong \k[[t]]/(t^{r_i}),&& R(\Ar,U_1)\cong \k, && R(\Ar,U_2)\cong \k[[t]]. 
\end{align*}
\end{proposition}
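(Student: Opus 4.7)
The plan is to prove the four assertions of the proposition in turn.

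\emph{Syzygy of $M[\1_i]$ and $\Omega$-stability.} Compute $\Omega M[\1_i]\cong\mathrm{rad}(P_i)$ by reading off the radical of $P_i$ from the biserial description in diagram (\ref{projec}). The result is a string module whose string traverses the left $\tau$-arm (through $\tau_i,\tau_{i+1},\tau_{i+2}$ looped $k$ times) down to the socle and back up the right $\zeta_i$-arm of length $r_i-1$. Comparing this string with those obtained by iterating hooks and cohooks from $\1_i$, a direct case analysis shows $\Omega M[\1_i]\in\mathfrak{A}_i$ precisely when the $\zeta_i$-arm has length one, i.e., iff $r_i=2$.

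\emph{Location of $M[\tau_{i+1}]$ for $k=1$.} Compute $\Omega M[\tau_{i+1}]$ from the projective cover $P_{i+1}\surjection M[\tau_{i+1}]$. The kernel is a string module whose string has two ``tops'' ($\tau_{i+2}\tau_{i+1}$ at vertex $i$ and $\zeta_{i+1}$ at vertex $i+1$); when $k=1$ the $\tau$-arm collapses to a single arrow, and the resulting string matches a module reachable from $\1_i$ by hooks and cohooks. Hence $\Omega M[\tau_{i+1}]\in\mathfrak{A}_i$, so $M[\tau_{i+1}]\in\Omega(\mathfrak{A}_i)$.

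\emph{Classification of orbits with $\SEnd_{\Ar}\cong\k$.} By Proposition \ref{prop2}, the candidate strings with $\End_{\Ar}(M[S])\cong\k$ are $\1_j$, $\tau_j$, and $\tau_{j+1}\tau_j$ for $j\in\{0,1,2\}\mod 3$. Compute the syzygies of each via projective covers and apply Theorem \ref{thm3}(iii) to trace the $\Omega$-orbits. Intersecting with $\mathfrak{A}_i\cup\Omega(\mathfrak{A}_i)$ yields exactly three orbits, represented by $U_0=M[\1_i]$, $U_1=M[\underline{a}_{i+2}]$, and $U_2=M[\underline{a}_{i+2}\underline{a}_{i+1}]$. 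Using the description of canonical homomorphisms in \S\ref{ape3}, verify $\SEnd_{\Ar}(U_j)\cong\k$ for each $j$ by counting canonical endomorphisms of $U_j$ modulo those factoring through projective covers.

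\emph{Universal deformation rings.} Existence follows from Theorem \ref{thm3}(i). For $U_0=M[\1_i]$: any lift over $R\in\hat{\Ca}$ is an $R$-free rank-one module supported at vertex $i$, so only the loop $\zeta_i$ can act non-trivially (as some $t\in R$); the other arrows annihilate the lift because each $\tau_j$ has source or target different from $i$. The relation $\zeta_i^{r_i}=\underline{c}_i^k$, combined with $\underline{c}_i=\tau_{i+2}\tau_{i+1}\tau_i$ acting as zero on any such lift, forces $t^{r_i}=0$, giving $R(\Ar,U_0)\cong\k[[t]]/(t^{r_i})$. For $U_1$: show $\Ext^1_{\Ar}(U_1,U_1)=0$ (rigidity), hence $R(\Ar,U_1)\cong\k$. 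For $U_2$: show $\Ext^1_{\Ar}(U_2,U_2)\cong\k$ and construct an explicit 1-parameter family of lifts over $\k[[t]]$ by deforming the string module structure, verifying that no higher obstructions arise, so $R(\Ar,U_2)\cong\k[[t]]$. The main obstacle I expect is this last step: verifying smoothness of the deformation functor for $U_2$ requires constructing an explicit unobstructed 1-parameter family and carefully checking compatibility with all relations in $I_{\bar r}$, which becomes delicate when $r_{i+1}$ or $r_{i+2}$ is large.
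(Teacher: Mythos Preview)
There is a genuine gap in your classification step. Proposition~\ref{prop2} classifies string modules whose \emph{endomorphism} ring is isomorphic to~$\k$, whereas the proposition you are proving concerns modules whose \emph{stable} endomorphism ring is~$\k$. These do not coincide: for example $U_1=M[\underline{a}_{i+2}]$ has a nonidentity canonical endomorphism factoring through $M[\1_{i+2}]$, so $\dim_\k\End_{\Ar}(U_1)\geq 2$, yet that endomorphism factors through a projective and $\SEnd_{\Ar}(U_1)\cong\k$. Consequently, taking the $\Omega$-orbits of the nine Proposition~\ref{prop2} modules and intersecting with $\mathfrak{A}_i\cup\Omega(\mathfrak{A}_i)$ does \emph{not} recover $U_1$ and $U_2$ in general: when $k\geq 2$ the components $\mathfrak{B}_j$ and $\mathfrak{C}_j$ are disjoint from $\mathfrak{A}_i\cup\Omega(\mathfrak{A}_i)$, so the only Proposition~\ref{prop2} module whose $\Omega$-orbit meets $\mathfrak{A}_i\cup\Omega(\mathfrak{A}_i)$ is $M[\1_i]$ itself, and your method detects a single orbit rather than three. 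Even granting that you separately verify $\SEnd_{\Ar}(U_j)\cong\k$ for $j=0,1,2$, nothing in your argument shows that these are the \emph{only} such orbits. The paper resolves this by explicitly listing $\Omega$-orbit representatives $A_{q,j}$ and $B_{q,j}$ (via hooks and cohooks) for \emph{all} modules in $\mathfrak{A}_i\cup\Omega(\mathfrak{A}_i)$, and for each one outside the three orbits exhibits a concrete nonzero endomorphism (factoring through $M[\1_i]$) that does not factor through a projective.

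Your argument for $R(\Ar,U_0)\cong\k[[t]]/(t^{r_i})$ via direct parametrisation of rank-one lifts is correct and is in fact more transparent than the paper's route. The paper instead constructs the lift $M[\zeta_i^{-(r_i-1)}]$ over $\k[[t]]/(t^{r_i})$, shows that the resulting map $R(\Ar,U_0)\to\k[[t]]/(t^{r_i})$ is surjective, and then excludes any further lift to $\k[[t]]/(t^{r_i+1})$ by computing $\Ext^1_{\Ar}(M[\zeta_i^{-(r_i-1)}],U_0)=0$. For $U_2$, your anticipated obstacle is a red herring: once you know $\Ext^1_{\Ar}(U_2,U_2)\cong\k$, the ring $R(\Ar,U_2)$ is already a quotient of $\k[[t]]$, so exhibiting any lift over $\k[[t]]$ that is nontrivial over $\k[[t]]/(t^2)$ forces the quotient map to be an isomorphism, with no separate obstruction analysis needed. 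The paper does this concretely by setting $T_0=(\1_i)_{hh}$, $T_l=T_{l-1}\tau_i(\1_i)_{hh}$, letting $t$ act on $M[T_l]$ as the canonical endomorphism $M[T_l]\twoheadrightarrow M[T_{l-1}]\hookrightarrow M[T_l]$, and passing to the inverse limit.
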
   
\begin{proof}
Let $i\in\{0,1,2\}\mod 3$ be fixed. Using hooks and co-hooks (see \S \ref{ape2}), we see that all $\Ar$-modules in $\mathfrak{A}_i\cup\Omega(\mathfrak{A}_i)$ lie in the $\Omega$-orbit 
of either
\begin{align*}
A_{q,0}&=M[(\underline{a}_{i+2}\underline{a}_{i+1}\underline{a}_i)^q], \text{ or }\\
A_{q,1}&=M[(\underline{a}_{i+2}\underline{a}_{i+1}\underline{a}_i)^q\underline{a}_{i+2}], \text{ or }\\
A_{q,2}&=M[(\underline{a}_{i+2}\underline{a}_{i+1}\underline{a}_i)^q\underline{a}_{i+2}\underline{a}_{i+1}], \text{ or }\\
B_{q,0}&=M[(\underline{b}_{i+1}\underline{b}_{i+2}\underline{b}_i)^q], \text { or }\\
B_{q,1}&=M[\underline{b}_i(\underline{b}_{i+1}\underline{b}_{i+2}\underline{b}_i)^q], \text { or }\\
B_{q,2}&=M[\underline{b}_{i+2}\underline{b}_i(\underline{b}_{i+1}\underline{b}_{i+2}\underline{b}_i)^q]
\end{align*}
for some $q\geq 0$. Note for example that $A_{0,0}=M[\1_i]=B_{0,0}$, $A_{0,1}=M[(\1_i)_h]$,  $A_{0,2}=M[(\1_i)_{hh}]$, $B_{0,1}=M[{_h(\1_i)}]$ and $B_{0,2}=M[{_{hh}(\1_i)}]$. Since 
$\Omega M[\1_i] = M[\zeta_i^{-r_i+1}\underline{c}_i^{k-1}\tau_{i+2}\tau_{i+1}]$ then $\mathfrak{A}_i=\Omega(\mathfrak{A}_i)$ if and only if $r_i=2$; and that if $k=1$ then 
$\Omega(M[\1_i])= M[{_c}(\tau_{i+1})]$. 

Using \S \ref{ape3} and the description of the projective indecomposable $\Ar$-module $P_i$ in (\ref{projec}), it is straight forward to show that the stable endomorphism ring of 
$A_{0,j}$ is 
isomorphic to $\k$ for $j\in \{0,1,2\}$ and that $\Ext_{\Ar}^1(A_{0,j},A_{0,j})$ is isomorphic to $\k$ for $j\in \{0,2\}$ and zero for $j=1$. On the other hand, for $q\geq 1$ and for $j\in 
\{0,1,2\}$, the $\Ar$-module $A_{q,j}$ has a non-zero endomorphism which factors through $M[\1_i]$ and which does not factor through a projective $\Ar$-module. 
Assume that $r_i=2$. Since in this case $\mathfrak{A}_i$ is $\Omega$-stable, then for all $j\in \{0,1,2\}\mod 3$ and for all $q\geq 0$, the $\Ar$-module $B_{q,j}$ lies in the $\Omega$-
orbit of  $A_{q',j'}$ for some $j'\in \{0,1,2\}$ and $q'\geq 0$. In particular, $B_{0,1}=\Omega^{-1}A_{0,0}$, $B_{0,2}=\Omega^{-1}A_{0,1}$ and $B_{1,0}=\Omega^{-1}A_{0,2}$.  
If $r_i\geq 3$ then each of the modules $B_{0,1}$, $B_{0,2}$ and $B_{q,j}$ with $j\in \{0,1,2\}$ and $q\geq 1$ have a non-zero endomorphism factoring through $M[\1_i]$ and which 
does not factor through a projective $\Ar$-module.  Therefore, for all $r_i\geq 2$, the modules in $\mathfrak{A}_i\cup \Omega(\mathfrak{A}_i)$ whose stable endomorphism rings are 
isomorphic to $\k$ are precisely the modules in $\Omega$-orbits of the modules $A_{0,0}$, $A_{0,1}$ and $A_{0,2}$. 
 
Since $\Ext_{\Ar}^1(A_{0,1},A_{0,1})=0$, it follows that $R(\Ar,A_{0,1})\cong\k$. Since $\Ext_{\Ar}^1(A_{0,j},A_{0,j})$ is isomorphic to $\k$ for $j\in \{0,2\}$, it follows that $R(\Ar,A_
{0,j})$ is a quotient of $\k[[t]]$ for $j\in \{0,2\}$.

Let consider the $\Ar$-module $A_{0,0}=M[\1_i]$.
\begin{claimproof}\label{claim1} 
The universal deformation ring $R(\Ar,A_{0,0})$ of $A_{0,0}$ is isomorphic to $\k[[t]]/(t^{r_i})$.
\end{claimproof}

\begin{proofclaim}
For all $l\in\{0,\ldots,r_i-1\}$ let $S_l=\zeta_i^{-l}$. Then for all $l\in\{1,\ldots,r_i-1\}$ there exists a non-trivial canonical endomorphism $\sigma_l$ of the $\Ar$-module $M
[S_l]$ which factors through $M[S_{l-1}]$, namely
\begin{equation}\label{sigma1}
\sigma_l: M[S_l]\surjection M[S_{l-1}]\injection M[S_l]. 
\end{equation}
Observe that  the kernel of $\sigma_l$ and the image of $\sigma_l^{l-1}$ are isomorphic to $A_{0,0}$, and that $\sigma_l^l=0$. Thus, for all $l\in\{0,\ldots,r_i-1\}$, the $\Ar$-module 
$M[S_l]$ is naturally a $k[[t]]/(t^{l+1})\otimes_\k\Ar$-module where the action of $t$ over $m\in M[S_l]$ is given as $t\cdot m=\sigma_l(m)$. In particular, $tM[S_l]\cong M[S_{l-1}]$ for 
all $l\in\{1,\ldots,r_i-1\}$. 

Let $l\in\{1,\ldots,r_i-1\}$ be fixed and let  $\{\bar{b}_1\}$ be a $\k$-basis of $A_{0,0}$. Using the 
isomorphism $M[S_l]/tM[S_l]\cong A_{0,0}$, we can lift $\bar{b}_1$ to an element $b_1\in M[S_l]$. It follows that $\{b_1\}$ is linearly independent over $\k$ and that $\{t^ab_1: 0\leq 
a\leq l\}$ is a $\k$-basis of $tM[S_l]\cong M[S_{l-1}]$. Therefore, $\{b_1\}$ is a $\k[[t]]/(t^{l+1})$-basis of $M[S_l]$, which means that $M[S_l]$ is free over $\k[[t]]/(t^{l+1})$. Moreover, 
$M[S_l]$ lies in a short exact sequences of $\Ar$-modules 
\begin{equation*}
0\to tM[S_l]\to M[S_l]\to \k\otimes_{\k[[t]]/(t^{l+1})}M[S_l]\to 0.
\end{equation*}
Consequently, there exists an isomorphism of $\Ar$-modules $\phi_l:\k\otimes_{\k[[t]]/(t^{l+1})}M[S_l]\to A_{0,0}$, which implies that $(M[S_l],\phi_l)$ is a lift of $A_
{0,0}$ over $k[[t]]/(t^{l+1})$. Consider the lift $(M[S_{r_i-1}],\phi_{r_i-1})$ of $A_{0,0}$ over $\k[[t]]/(t^{r_i})$. Since $\End_{\Ar}(A_{0,0})\cong \k$ then by Theorem \ref{thm3}(i), there 
exists a unique morphism $\alpha:R(\Ar,A_{0,0})\to \k[[t]]/(t^{r_i})$ in 
$\hat{\Ca}$ such that $M[S_{r_i-1}]\cong \k[[t]]/(t^{r_i})\otimes_{R(\Ar,A_{0,0}),\alpha} U(\Ar,A_{0,0})$,  where $R(\Ar, A_{0,0})$ and $U(\Ar,A_{0,0})$ are respectively the universal 
deformation ring and the universal deformation of the $\Ar$-module $A_{0,0}$. Since $(M[S_1],\phi_1)$ is not the trivial lift of $A_{0,0}$ over $\k[[t]]/(t^2)$, it follows that there exists 
a unique surjective morphism $\alpha': R(\Ar,A_{0,0})\to \k[[t]]/(t^2)$ in $\hat{\Ca}$ such that $M[S_1]\cong k[[t]]/(t^2)\otimes_{R(\Ar,A_{0,0}),\alpha'}U(\Ar,A_{0,0})$. By considering the 
natural projection $\pi_{r_i,2}: \k[[t]]/(t^{r_i})\to \k[[t]]/(t^2)$ and the lift $(U',\phi_{U'})$  of $A_{0,0}$ over $\k[[t]]/(t^2)$ corresponding to the morphism $\pi_{r_i,2}\circ \alpha$, we obtain 
\begin{align*}
U'&\cong \k[[t]]/(t^2)\otimes_{R(\Ar,A_{0,0}),\pi_{r_i,2}\circ \alpha}U(\Ar,A_{0,0})\\
&\cong \k[[t]]/(t^2)\otimes_{\k[[t]]/(t^{r_i}),\pi_{r_i,2}}\left(\k[[t]]/(t^{r_i})\otimes_{R(\Ar,A_{0,0}),\alpha}U(\Ar,A_{0,0})\right)\\
&\cong \k[[t]]/(t^2)\otimes_{\k[[t]]/(t^{r_i}),\pi_{r_i,2}}M[S_{r_i-1}]\\
&\cong M[S_{r_i-1}]/t^2M[S_{r_i-1}]\cong M[S_1]. 
\end{align*} 
It follows from Remark \ref{rem1} that $[U',\phi_{U'}]=[M[S_1],\phi_1]$ in $\hat{\Fun}_{A_{0,0}}(\k[[t]]/(t^2))$. The uniqueness of $\alpha'$ implies $\alpha'= \pi_{r_i,2}\circ \alpha$. 
Since $\alpha'$ is surjective, it follows that $\alpha$ is also surjective.  We want to prove that $\alpha$ is an isomorphism. Suppose this is false.  Then there exists a surjective $\k$-
algebra 
homomorphism $\alpha_0: R(\Ar,A_{0,0})\to \k[[t]]/(t^{r_i+1})$ in $\hat{\Ca}$ such that $\pi_{r_i+1,r_i}\circ \alpha_0=\alpha$, where $\pi_{r_i+1,r_i}:\k[[t]]/(t^{r_i+1})\to \k[[t]]/(t^{r_i})$ is 
the natural projection. Let $M_0$ be a $\k[[t]]/(t^{r_i+1})\otimes_\k\Ar$-module which defines a lift of $A_{0,0}$ over $\k[[t]]/(t^{r_i+1})$ corresponding to $\alpha_0$. Since the kernel of 
$\pi_{r_i+1,r_i}$ is $(t^{r_i})/(t^{r_i+1})$, then $M_0/t^{r_i}M_0\cong M[S_{r_i-1}]$. Consider the $\k[[t]]/(t^{r_i+1})\otimes_\k\Ar$-module homomorphism $g:M_0\to t^{r_i}M_0$ defined by 
$g(m)=t^{r_i}m$ for all $m\in M_0$. Since $M_0$ is free over $\k[[t]]/(t^{r_i+1})$, if follows that the kernel of $g$ is isomorphic to $tM_0$. Since $g$ is a surjection, it follows that $M_0/
tM_0\cong t^{r_i}M_0$, which implies that $t^{r_i}M_0\cong A_{0,0}$. Hence, there exists a non-split short exact sequence of $\k[[t]]/(t^{r_i+1})\otimes_\k \Ar$-modules 
\begin{equation}\label{seq1}
0\to A_{0,0}\to M_0\to M[S_{r_i-1}]\to 0.
\end{equation}
Since $\Omega M[S_{r_i-1}] = \Omega M[\zeta_i^{-r_i+1}]=M[\underline{c}_i^{k-1}\tau_{i+2}\tau_{i+1}]$, then 
\[\Ext_{\Ar}^1(M[S_{r_i-1}],A_{0,0})=\underline{\Hom}_{\Ar}(\Omega M[S_{r_i-1}],A_{0,0})=0.\] 
It follows that the sequence (\ref{seq1}) splits as a sequence of $\Ar$-modules. Hence $M_0=A_{0,0}\oplus M[S_{r_i-1}]$ as $\Ar$-modules. Identifying the elements of $M_0$ as $
(a,m)$ with $a\in A_{0,0}$ and $m\in M[S_{r_i-1}]$ we see that the $t$ acts on $(a,m)\in M_0$ as $t\cdot(a,m)=(\mu(m),\sigma_{r_i-1}(m))$, where $\mu: M[S_{r_i-1}]\to A_{0,0}$ is a 
surjective $\Ar$-module homomorphism and $\sigma_{r_i-1}$ is as in (\ref{sigma1}). Since the canonical homomorphism $\epsilon: M[S_{r_i-1}] \surjection M[\1_i]\to M[\1_i]$ generates $\Hom_{\Ar}(M[S_{r_i-1}],A_{0,0})$, 
then there exists $c\in \k^\ast$ such that $\mu=c\epsilon$, which implies that the kernel of $\mu$ is $tM[S_{r_i-1}]$. Therefore $t^{r_i}(a,m)=(\mu(t^{r_i-1}m),\sigma_{r_i-1}^{r_i}
(m))=(0,0)$ for all $a\in A_{0,0}$ and $m\in M[S_{r_i-1}]$, which contradicts the fact that $t^{r_i}M_0\cong A_{0,0}$. Thus $\alpha: R(\Ar,A_{0,0}) \to \k[[t]]/(t^{r_i})$ is an isomorphism 
and $R(\Ar,A_{0,0})\cong \k[[t]]/(t^{r_i})$.  This finishes the proof of Claim \ref{claim1}.
\end{proofclaim}
Next consider the string $\Ar$-module $A_{0,2}=M[(\1_i)_{hh}]$.
\begin{claimproof}\label{claim2}
The universal deformation ring $R(\Ar,A_{0,2})$ of $A_{0,2}$ is isomorphic to $\k[[t]]$.
\end{claimproof}
\begin{proofclaim}
Let $T_0=(\1_i)_{hh}$ and for all $l\geq 1$, let $T_l=T_{l-1}\tau_i(\1_i)_{hh}$. Thus, for all $l\geq 1$ and by using similar arguments as those 
in the proof of Claim \ref{claim1}, we get lifts $(M[T_l], \varphi_l)$ of $A_{0,2}$ over $\k[[t]]/(t^{l+1})$, where for each $l\geq 1$,  $t$ acts on $m\in M[T_l]$ as $t\cdot 
m =\delta_l(m)$, where $
\delta_l$ is the non-trivial canonical endomorphism of $M[T_l]$ that factors through $M[T_{l-1}]$, namely 
\begin{equation}\label{sigma2}
\delta_l: M[T_l]\surjection M[T_{l-1}]\injection M[T_l].
\end{equation}
Note that for all $l\geq 1$, we have natural projections $\pi_{l,l-1}: M[T_l]\to M[T_{l-1}]$. Let $N_0=\invlim M[T_l]$ and let $t$ act on $N_0$ as $\invlim \pi_{l,l-1}$. In particular, $\k
\otimes_{\k[[t]]}N_0\cong N_0/tN_0\cong A_{0,2}$, which implies that there exists an isomorphism of $\Ar$-modules $\varphi_0:\k\otimes_{\k[[t]]}N_0\to A_{0,2}$. Let $n=\dim_\k A_
{0,2}$ and let $\{\bar{B}_j\}_{1\leq j\leq n}$ be a $\k$-basis of $N_0/tN_0$. For all $1\leq j\leq n$ , we are able to lift these elements $\bar{B}_j$ in $N_0/tN_0$ to elements $B_j$ of 
$N_0$ such that $\{B_j\}_{1\leq j\leq n}$ is a generating set of the $\k[[t]]\otimes_\k\Ar$-module $N_0$. It follows that $\{B_j\}_{1\leq j\leq n}$ is a $\k[[t]]$-basis 
of $N_0$, which implies that $N_0$ is free over $\k[[t]]$. Therefore, $(N_0,\varphi_0)$ is a lift of $A_{0,2}$ over $\k[[t]]$ and there exists a unique $\k$-algebra homomorphism $\beta:R(\Ar,A_{0,2})\to \k[[t]]$ in $\hat{\Ca}$ corresponding to the deformation defined by $(N_0,\varphi_0)$, where $R(\Ar,A_{0,2})$ is  the universal deformation ring of $A_{0,2}$. 
Since $N_0/t^2N_0\cong M[T_1]$ as $\Ar$-modules, we can see as in the proof of Claim \ref{claim1} that since $N_0/t^2N_0$ defines a non-trivial lift of $A_{0,2}$ over $\k[[t]]/
(t^2)$, then  $\beta$ is a surjection. Since $R(\Ar,A_{0,2})$ is a quotient of $\k[[t]]$, it follows that $\beta$ is an isomorphism. Hence $R(\Ar,A_{0,2})\cong \k[[t]]$. This finishes the proof 
of Claim \ref{claim2}, which finishes the proof of Proposition \ref{prop4}.
\end{proofclaim}
\end{proof}

\begin{proposition}\label{prop5}
For $i\in \{0,1,2\}\mod 3$, let $\mathfrak{B}_i$ be the component of $\Gamma_s(\Ar)$ containing the $\Ar$-module $M[\tau_i]$, where $
\bar{r}=(r_0,r_1,r_2,k)$ and $r_0,r_1,r_2\geq 3$, $k\geq 1$. Define 
\begin{align*}
(\tau_i)_h=\tau_i\underline{a}_{i+2}&&\text{ and }&&{_h}(\tau_i)=\underline{b}_{i+1}\tau_i.
\end{align*}
If $k=1$ then $\mathfrak{B}_i=\Omega(\mathfrak{A}_{i+2})$, where $\mathfrak{A}_{i+2}$ is as in Proposition \ref{prop4}. Thus, $\mathfrak{B}_i=\Omega(\mathfrak{B}_i)$ if and only if 
$k=1$ and $r_{i+2}=2$. 
The modules in $\mathfrak{B}_i\cup \Omega (\mathfrak{B}_i) $ whose stable endomorphism rings are isomorphic to $\k$ are precisely the modules in the $
\Omega$-orbits  of the modules $V_0=M[\tau_i]$, $V_1=M[(\tau_i)_h]$ and $V_{-1}=M[{_h}(\tau_i)]$. If $k=1$ then the universal deformation rings are 
\begin{align*}
R(\Ar,V_0)\cong \k,&&  R(\Ar,V_1)\cong \k[[t]]/(t^{r_{i+2}}), &&R(\Ar,V_{-1})\cong \k[[t]]. 
\end{align*}
 If $k\geq 2$ then the universal deformation rings are 
\begin{align*}
R(\Ar,V_0)\cong \k,&&  R(\Ar,V_1)\cong \k[[t]], &&R(\Ar,V_{-1})\cong \k[[t]]. 
\end{align*}

\end{proposition}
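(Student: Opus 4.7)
The plan is to follow closely the blueprint of the proof of Proposition \ref{prop4}, adapting it to the component $\mathfrak{B}_i$ of $M[\tau_i]$. I begin by enumerating the isomorphism classes of string modules in $\mathfrak{B}_i\cup\Omega(\mathfrak{B}_i)$ via iterated hook and co-hook operations on both sides of the string $\tau_i$, producing six infinite families of string modules indexed by $q\ge 0$ analogous to the families $A_{q,j}, B_{q,j}$ of Proposition \ref{prop4}; the three candidates $V_0=M[\tau_i]$, $V_1=M[\tau_i\underline{a}_{i+2}]$ and $V_{-1}=M[\underline{b}_{i+1}\tau_i]$ lie as the first-generation members closest to $V_0$ in the $\mathbb{ZA}_\infty^\infty$ pattern of Figure \ref{component}. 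To detect when $\mathfrak{B}_i=\Omega(\mathfrak{A}_{i+2})$, I compute $\Omega M[\tau_i]$ by resolving it with the projective cover $P_i\surjection M[\tau_i]$ using the structure in (\ref{projec}); for $k=1$ the resulting kernel belongs to $\mathfrak{A}_{i+2}$, matching the syzygy of some module there computed in the proof of Proposition \ref{prop4}, while for $k\ge 2$ the extra factor $\underline{c}_i^{k-1}$ forces $\Omega M[\tau_i]$ into a different component. Combined with Proposition \ref{prop4}, this yields that $\mathfrak{B}_i$ is $\Omega$-stable if and only if $k=1$ and $r_{i+2}=2$.

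Using Proposition \ref{prop2}, the $\Ar$-modules in $\mathfrak{B}_i\cup\Omega(\mathfrak{B}_i)$ with endomorphism ring isomorphic to $\k$ have strings equivalent to $\1_j$, $\tau_j$, or $\tau_{j+1}\tau_j$ for some $j\in\{0,1,2\}\mod 3$. Running through the enumeration and analyzing which canonical endomorphisms factor through a projective (using Subsection \ref{ape3} and (\ref{projec})), I verify that exactly the three $\Omega$-orbits of $V_0,V_1,V_{-1}$ remain with stable endomorphism ring isomorphic to $\k$; all other candidates admit a non-zero canonical endomorphism not factoring through a projective. The same analysis, together with a direct computation of $\Hom_{\Ar}(\Omega V_0,V_0)$ modulo factorizations through projective $\Ar$-modules, gives $\Ext_{\Ar}^1(V_0,V_0)=0$, while $\dim_\k\Ext_{\Ar}^1(V_1,V_1)=1=\dim_\k\Ext_{\Ar}^1(V_{-1},V_{-1})$. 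By Theorem \ref{thm3}, $R(\Ar,V_0)\cong\k$, and $R(\Ar,V_1)$ and $R(\Ar,V_{-1})$ are quotients of $\k[[t]]$.

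To determine the remaining rings I split into cases. For $V_{-1}$ in all cases, and for $V_1$ when $k\ge 2$, I mimic Claim \ref{claim2} of Proposition \ref{prop4}: construct an infinite chain of strings $T_0, T_1, T_2, \dots$ obtained by concatenating copies of the base string through a suitable directed bridge from $\mathcal{M}$ chosen so that no subword in $J$ appears; each $M[T_l]$ carries a nilpotent canonical endomorphism $\delta_l$ of nilpotency index $l+1$, turning it into a lift of the base module over $\k[[t]]/(t^{l+1})$, and the inverse limit $\invlim M[T_l]$ is a lift over $\k[[t]]$. The argument of Claim \ref{claim2} then shows the corresponding map $R(\Ar,-)\to\k[[t]]$ is an isomorphism. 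For $V_1$ when $k=1$, I use Theorem \ref{thm3}(iii) and the identification $\mathfrak{B}_i=\Omega(\mathfrak{A}_{i+2})$: since $\Omega$ induces a bijection between the three $\Omega$-orbits with stable endomorphism ring isomorphic to $\k$ in $\mathfrak{A}_{i+2}$ and those in $\mathfrak{B}_i$, and preserves universal deformation rings, the three rings attached to $V_0,V_1,V_{-1}$ must be a permutation of $\{\k,\k[[t]],\k[[t]]/(t^{r_{i+2}})\}$ (the three rings attached to $U_1,U_2,U_0$ from Proposition \ref{prop4}); combined with $R(\Ar,V_0)\cong\k$ and $R(\Ar,V_{-1})\cong\k[[t]]$, this forces $R(\Ar,V_1)\cong\k[[t]]/(t^{r_{i+2}})$.

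The main obstacle is the design of the bridge strings $T_l$ for the $\k[[t]]$-lifts of $V_{-1}$ and of $V_1$ (when $k\ge 2$). Naive self-concatenation $V_{-1}V_{-1}$ produces the subword $\underline{c}_{i+1}^k\in J$ at the junction for every $k\ge 1$, because the arrow runs from the two copies fuse into $(\tau_i\tau_{i+2}\tau_{i+1})^k$, so the bridge must be a carefully chosen directed string from $\mathcal{M}$ that avoids this collision. A parallel combinatorial subtlety shapes the lift for $V_1$ when $k\ge 2$, where one must exploit the extra depth in the relation $\zeta_{i+2}^{r_{i+2}}=\underline{c}_{i+2}^k$ (present because $k\ge 2$) to extend the chain indefinitely, in contrast to the $k=1$ case where the shallow socle forces truncation at order $r_{i+2}$. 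Verifying that each $M[T_l]$ is free of the expected rank over $\k[[t]]/(t^{l+1})$ and that $t$ acts by the canonical nilpotent endomorphism $\delta_l$ in the sense needed for Claim \ref{claim2} is the technical heart of the proof.
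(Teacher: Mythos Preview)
Your proposal is correct and follows essentially the same strategy as the paper: enumerate the $\Omega$-orbits in $\mathfrak{B}_i$ via six families (the paper calls them $C_{q,j}$ and $D_{q,j}$), compute stable endomorphism rings and $\Ext^1$ groups to isolate $V_0,V_1,V_{-1}$, and for $k\ge 2$ build explicit towers of lifts via concatenated strings $S_l=S_{l-1}\tau_{i+1}(\tau_i)_h$ and $T_l=T_{l-1}\zeta_i^{-1}\,{}_h(\tau_i)$ exactly as in Claim~\ref{claim2}.

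The only place you diverge is the $k=1$ case. The paper does not run a separate lift construction there at all: once $\mathfrak{B}_i=\Omega(\mathfrak{A}_{i+2})$ is known, it pins down the \emph{specific} $\Omega$-translates $V_1=C_{0,1}\cong\Omega^{-2}M[\1_{i+2}]$ and $V_{-1}=D_{0,1}\cong\Omega M[(\1_{i+2})_{hh}]$ and reads both rings off Proposition~\ref{prop4} via Theorem~\ref{thm3}(iii). Your route---establish $R(\Ar,V_{-1})\cong\k[[t]]$ by the direct tower construction (which does still go through for $k=1$) and then recover $R(\Ar,V_1)\cong\k[[t]]/(t^{r_{i+2}})$ by elimination among the three possible rings---is valid but slightly more work than simply naming the $\Omega$-translates. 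One small correction: the bridges in the concatenations are single arrows ($\tau_{i+1}$ for the $S_l$, and $\zeta_i^{-1}$ for the $T_l$), not maximal directed strings from $\mathcal{M}$; adjust that phrasing.
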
   
\begin{proof}
Let $i\in\{0,1,2\}\mod 3$ be fixed. Using hooks and co-hooks (see \S \ref{ape2}) we see that all $\Ar$-modules in $\mathfrak{B}_i$ lie in the $\Omega$-orbit of either
\begin{align*}
C_{q,0}&=M[\tau_i(\underline{a}_{i+2}\underline{a}_{i+1}\underline{a}_i)^q], \text{ or }\\
C_{q,1}&=M[\tau_i(\underline{a}_{i+2}\underline{a}_{i+1}\underline{a}_i)^q\underline{a}_{i+2}], \text{ or }\\
C_{q,2}&=M[\tau_i(\underline{a}_{i+2}\underline{a}_{i+1}\underline{a}_i)^q\underline{a}_{i+2}\underline{a}_{i+1}], \text{ or }\\
D_{q,0}&=M[(\underline{b}_{i+2}\underline{b}_i\underline{b}_{i+1})^q\tau_i], \text { or }\\
D_{q,1}&=M[\underline{b}_{i+1}(\underline{b}_{i+2}\underline{b}_i\underline{b}_{i+1})^q\tau_i], \text { or }\\
D_{q,2}&=M[\underline{b}_i\underline{b}_{i+1}(\underline{b}_{i+2}\underline{b}_i\underline{b}_{i+1})^q\tau_i]
\end{align*}
for some $q\geq 0$. 

Note that $C_{0,0}=M[\tau_i]=D_{0,0}$, $C_{0,1}=M[(\tau_i)_h]$, $C_{0,2}=M[(\tau_i)_{hh}]$, $D_{0,1}=M[{_h(\tau_i)}]$ and $D_{0,2}=M[{_{hh}(\tau_i)}]$. 
By Proposition \ref{prop4}, it follows that if $k=1$ then $M[\tau_i]$ lies in $\Omega(\mathfrak{A}_{i+2})$, which implies that $\mathfrak{B}_i$ is $\Omega$-stable if and only if $r_{i+2}
=2$ and $k=1$. 

Using \S \ref{ape3} 
and the description of the projective indecomposable $\Ar$-module $P_i$ in (\ref{projec}), it is straight forward to show that the stable endomorphism rings of $C_{0,j}$ and $D_{0,j}
$ are isomorphic to $\k$ for $j\in \{0,1\}$, that $\Ext_{\Ar}^1(C_{0,1},C_{0,1})$ and $\Ext_{\Ar}^1(D_{0,1},D_{0,1})$ are isomorphic to $\k$,  and that $\Ext_{\Ar}^1(C_{0,0},C_{0,0})=0$.  
Moreover, $D_{0,2}$ and $D_{q,j}$ with $q\geq 1$ and $j\in\{0,1,2\}$ have a non-zero canonical endomorphism factoring through $M[\tau_i]$ that does not factor through a projective 
$\Ar$-module. If $k\geq 2$ or $r_{i+2}\geq 3$ then $C_{0,2}$ and $C_{q,j}$ with $q\geq 1$ and $j\in\{0,1,2\}$ have a non-zero canonical endomorphism which factors through 
$M[\1_{i+1}]$ and which does not factor through a projective $\Ar$-module. If $k=1$ and $r_{i+2}=2$ then $C_{0,2}=\Omega^{-1}C_{0,1}$, $C_{1,0}=\Omega^{-3}C_{0,0}$, $C_{1,1}=
\Omega^{-3}D_{0,1}$, and the modules $C_{1,2}$ and $C_{q,j}$ with $q\geq 2$, $j\in\{0,1,2\}$ have a non-trivial canonical endomorphism factoring through $M[\1_{i+1}]$ that does 
not factor through a projective $\Ar$-module.  Therefore, for all $r_0,r_1,r_2\geq 2$ and $k\geq 1$, the modules in $\mathfrak{B}_i\cup \Omega (\mathfrak{B}_i) $ whose stable 
endomorphism rings are isomorphic to $\k$ are precisely the modules in the $
\Omega$-orbits  of $C_{0,0}$, $C_{0,1}$ and $D_{0,1}$.
 
Since $\Ext_{\Ar}^1(C_{0,0},C_{0,0})=0$, it follows that $R(\Ar,C_{0,0})\cong\k$. Since $\Ext_{\Ar}^1(C_{0,1},C_{0,1})$ and $\Ext_{\Ar}^1(D_{0,1},D_{0,1})$ are both isomorphic to $
\k$ then $R(\Ar,C_{0,1})$ and $R(\Ar,D_{0,1})$ are quotients of $\k[[t]]$. Assume that $k=1$. Then by Theorem \ref{thm3} and Proposition \ref{prop4}, it follows that  
\[R(\Ar,C_{0,1})\cong R(\Ar,\Omega^{-2}M[\1_{i+2}])\cong \k[[t]]/(t^{r_{i+2}}),\] and 
\[R(\Ar,D_{0,1})\cong R(\Ar,\Omega M[(\1_{i+2})_{hh}])\cong \k[[t]].\] 

Next assume that $k\geq 2$. Let $S_0=(\tau_i)_h$, $T_0={_h(\tau_i)}$ and for all $l\geq 1$, let $S_l=S_{l-1}\tau_{i+1}(\tau_i)_h$ and $T_l=T_{l-1}\zeta_i^{-1} {_h(\tau_i)}$. Then by using similar 
arguments as in proof of Claim \ref{claim2} within the proof of Proposition \ref{prop4}, we obtain that $R(\Ar,C_{0,1})\cong \k[[t]]
\cong R(\Ar,D_{0,1})$. This finishes the proof of Proposition \ref{prop5}.
\end{proof}

Let $\mathfrak{C}_i$ be the component of $\Gamma_s(\Ar)$ containing the string module $M[\tau_{i+1}\tau_i]$ for some $i\in \{0,1,2\}\mod 3$. Observe that if $k=1$ then $
\mathfrak{C}_i$ is one of the $3$-tubes, otherwise $\mathfrak{C}_i$ is a component of type $\mathbb{ZA}_\infty^\infty$.  In Proposition \ref{prop7}, we determine the 
universal deformation rings of modules whose stable endomorphism ring is isomorphic to $\k$ lying in the $3$-tubes (see Proposition \ref{prop7}). In the following result, we assume 
that $k\geq 2$.

\begin{proposition}\label{prop6}
For $i\in \{0,1,2\}\mod 3$, let $\mathfrak{C}_i$ be the component of $\Gamma_s(\Ar)$ containing the $\Ar$-module $M[\tau_{i+1}\tau_i]$, 
where $\bar{r}=(r_0,r_1,r_2,k)$ and $r_0,r_1,r_2\geq 2$, $k\geq 2$. Define 
\begin{align*}
{_h}(\tau_{i+1}\tau_i)=\underline{b}_{i+2}\tau_{i+1}\tau_i&& \text{ and } && {_{hh}}(\tau_{i+1}\tau_i)=\underline{b}_{i+1}\underline{b}_{i+2}\tau_{i+1}\tau_i.
\end{align*} 
The component $\mathfrak{C}_i$ is $\Omega$-stable if and only if $k=2$. The modules in $\mathfrak{C}_i$ whose stable endomorphism ring is isomorphic to $\k$ are precisely the 
modules in the $\Omega$-orbits of the modules $W_0=M[\tau_{i+1}\tau_i]$, $W_{-1}=M[{_h}(\tau_{i+1}\tau_i)]$ and $W_{-2}=M[{_{hh}}(\tau_{i
+1}\tau_i)]$. Their universal deformation rings are 
\begin{align*}
R(\Ar,W_0)\cong \k[[t]]/(t^k),&& R(\Ar,W_{-1})\cong \k,&&R(\Ar,W_{-2})\cong \k[[t]]. 
\end{align*}
\end{proposition}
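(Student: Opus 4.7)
My plan is to follow the strategy of Propositions~\ref{prop4} and~\ref{prop5}. Starting from $W_0 = M[\tau_{i+1}\tau_i]$, which has minimal length in $\mathfrak{C}_i$, I first use hooks and co-hooks (\S\ref{ape2}) to enumerate all string modules of $\mathfrak{C}_i$ in a finite collection of infinite families indexed by $q\geq 0$, analogous to $A_{q,j}$ and $B_{q,j}$ in Proposition~\ref{prop4}. A direct computation of the projective cover of $W_0$ using~(\ref{projec}) and of the corresponding syzygy then shows $\Om W_0\in \mathfrak{C}_i$ if and only if $k=2$, yielding the $\Om$-stability statement. Applying~\S\ref{ape3} and~(\ref{projec}), I check that every module in $\mathfrak{C}_i$ outside the $\Om$-orbits of $W_0$, $W_{-1}$, $W_{-2}$ admits a non-trivial canonical endomorphism that factors through either $W_0$ or $M[\1_{i+1}]$ and does not factor through a projective $\Ar$-module; such a module therefore has stable endomorphism ring strictly larger than $\k$, while a parallel analysis confirms $\SEnd_{\Ar}(W_j)\cong \k$ for $j\in\{0,-1,-2\}$.

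Computing $\Ext^1$ via the identity $\Ext^1_{\Ar}(X,Y)\cong \underline{\Hom}_{\Ar}(\Om X,Y)$ gives $\Ext^1_{\Ar}(W_{-1},W_{-1})=0$, $\Ext^1_{\Ar}(W_{-2},W_{-2})\cong \k$ and $\Ext^1_{\Ar}(W_0,W_0)\cong \k$. Consequently $R(\Ar,W_{-1})\cong \k$, while $R(\Ar,W_{-2})$ and $R(\Ar,W_0)$ are both quotients of $\k[[t]]$. To pin down $R(\Ar,W_{-2})\cong \k[[t]]$, I mimic Claim~\ref{claim2}: set $T_0=\underline{b}_{i+1}\underline{b}_{i+2}\tau_{i+1}\tau_i$ and iteratively build $T_l=T_{l-1}\,\kappa\,T_0$ for a connector word $\kappa$ chosen so the concatenation stays a valid string avoiding all words in $J$. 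Each $M[T_l]$ is naturally a lift of $W_{-2}$ over $\k[[t]]/(t^{l+1})$ with $t$ acting as the canonical endomorphism $\delta_l: M[T_l]\surjection M[T_{l-1}]\injection M[T_l]$, and taking the inverse limit $N_0=\invlim M[T_l]$ produces a lift of $W_{-2}$ over $\k[[t]]$; the argument of Claim~\ref{claim2} then yields $R(\Ar,W_{-2})\cong \k[[t]]$.

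The main obstacle is the computation $R(\Ar,W_0)\cong \k[[t]]/(t^k)$. For $0\leq l\leq k-1$ I define $S_l=\underline{c}_{i+2}^l\tau_{i+1}\tau_i$; these are valid strings precisely because $\underline{c}_{i+2}^k\in J$, so the construction terminates at $l=k-1$. Each $M[S_l]$ is free over $\k[[t]]/(t^{l+1})$ with $t$ acting as the canonical endomorphism $\sigma_l: M[S_l]\surjection M[S_{l-1}]\injection M[S_l]$, making it a lift of $W_0$ over $\k[[t]]/(t^{l+1})$; in particular $M[S_{k-1}]$ yields a surjection $R(\Ar,W_0)\surjection \k[[t]]/(t^k)$. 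To prove this surjection is an isomorphism, suppose for contradiction there exists a lift $M_0$ of $W_0$ over $\k[[t]]/(t^{k+1})$ reducing to $M[S_{k-1}]$ modulo $t^k$, producing the short exact sequence of $\Ar$-modules
\[
0\to W_0\to M_0\to M[S_{k-1}]\to 0.
\]
The delicate step is to verify $\Ext^1_{\Ar}(M[S_{k-1}],W_0)=0$, which requires computing $\Om M[S_{k-1}]$ from~(\ref{projec}) and showing via~\S\ref{ape3} that every canonical homomorphism $\Om M[S_{k-1}]\to W_0$ factors through a projective $\Ar$-module. Once the sequence splits, write $M_0\cong W_0\oplus M[S_{k-1}]$ with $t$-action $t\cdot(w,m)=(\mu(m),\sigma_{k-1}(m))$ for some $\Ar$-homomorphism $\mu:M[S_{k-1}]\to W_0$. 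Since $\Hom_{\Ar}(M[S_{k-1}],W_0)$ is one-dimensional, spanned by a canonical projection whose kernel equals $\sigma_{k-1}(M[S_{k-1}])$, the same argument as in Claim~\ref{claim1} yields $t^k M_0=0$, contradicting $t^k M_0\cong W_0\neq 0$.
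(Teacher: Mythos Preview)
Your proposal is correct and follows essentially the same approach as the paper: enumerate the component via hooks and co-hooks into two families $E_{q,j}$ and $F_{q,j}$, determine $\Om$-stability from $\Om W_0$, rule out all but three $\Om$-orbits via canonical endomorphisms, compute the $\Ext^1$ groups to reduce $R(\Ar,W_0)$ and $R(\Ar,W_{-2})$ to quotients of $\k[[t]]$, and then run the arguments of Claims~\ref{claim1} and~\ref{claim2} with the strings $S_l=\underline{c}_{i+2}^l\tau_{i+1}\tau_i$ and an iterated tower $T_l$. Two small points to tighten: the paper finds that the $E_{q,j}$ (the ``right-hook'' family) admit, for $k\geq 3$, a non-trivial canonical endomorphism factoring through $M[\1_{i+2}]$ rather than $M[\1_{i+1}]$, so double-check that index; and your unspecified connector $\kappa$ should be made explicit---the paper takes $T_l={_{hh}}(\tau_{i+1}\tau_i)\,\zeta_i^{-1}\,T_{l-1}$, i.e.\ $\kappa=\zeta_i^{-1}$, which is what makes the concatenation a valid string avoiding $J$.
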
  
\begin{proof}
Let $i\in\{0,1,2\}\mod 3$ be fixed. Using hooks and co-hooks (see \S \ref{ape2}) we see that all $\Ar$-modules in $\mathfrak{C}_i$ lie in the $\Omega$-orbit of either
\begin{align*}
E_{q,0}&=M[\tau_{i+1}\tau_i(\underline{a}_{i+2}\underline{a}_{i+1}\underline{a}_i)^q], \text{ or }\\
E_{q,1}&=M[\tau_{i+1}\tau_i(\underline{a}_{i+2}\underline{a}_{i+1}\underline{a}_i)^q\underline{a}_{i+2}], \text{ or }\\
E_{q,2}&=M[\tau_{i+1}\tau_i(\underline{a}_{i+2}\underline{a}_{i+1}\underline{a}_i)^q\underline{a}_{i+2}\underline{a}_{i+1}], \text{ or }\\
F_{q,0}&=M[(\underline{b}_i\underline{b}_{i+1}\underline{b}_{i+2})^q\tau_{i+1}\tau_i], \text { or }\\
F_{q,1}&=M[\underline{b}_{i+2}(\underline{b}_i\underline{b}_{i+1}\underline{b}_{i+2})^q\tau_{i+1}\tau_i], \text { or }\\
F_{q,2}&=M[\underline{b}_{i+1}\underline{b}_{i+2}(\underline{b}_i\underline{b}_{i+1}\underline{b}_{i+2})^q\tau_{i+1}\tau_i]
\end{align*}
for some $q\geq 0$. Note that $E_{0,0}=M[\tau_{i+1}\tau_i]=F_{0,0}$, $E_{0,1}=M[(\tau_{i+1}\tau_i)_h]$, $E_{0,2}=M[(\tau_{i+1}\tau_i)_{hh}]$,  $F_{0,1}=M[{_h(\tau_{i+1}\tau_i)}]$ 
and $F_{0,2}=M[{_{hh}(\tau_{i+2}\tau_i)}]$. Since $\Omega F_{0,0} =M[{_c}(\tau_{i+1}\tau_i\underline{c}_i^{k-2})]$, then $\mathfrak{C}_i$ is $\Omega$-stable if and only if $k=2$. 

By using \S \ref{ape3} and the description of the projective indecomposable $\Ar$-module $P_i$ in (\ref{projec}), it is straight 
forward to show that for all $j\in\{0,1,2\}$, the stable endomorphism ring of $F_{0,j}$ is isomorphic to $\k$ and for $q\geq 1$, the module $F_{q,j}$ has a non-trivial canonical 
endomorphism which factors through $M[\tau_{i+1}\tau_i]$ and which does not factor through a projective $\Ar$-module. 

Assume first that $k=2$. Since in this case $\mathfrak{C}_i$ is $\Omega$-stable, then for all $j\in \{0,1,2\}\mod 3$ and for all $q\geq 0$ the $\Ar$-module $E_{q,j}$ lies in the $\Omega
$-orbit of  $F_{q',j'}$ for some $j'\in \{0,1,2\}$ and $q'\geq 0$. In particular, $E_{0,1}= \Omega^{-1} F_{0,0}$, $E_{0,2}=\Omega^{-1} F_{0,1}$ and $E_{1,0}=\Omega^{-1} F_{0,2}$.  
Next assume that $k\geq 3$. Then for all $q\geq 0$ and $j\in\{0,1,2\}$ the module $E_{q,j}$ has a non-trivial canonical endomorphism, which factors through $M[\1_{i+2}]$ and which 
does not factor through a projective $\Ar$-module. Therefore for all $k\geq 2$,  the modules in $\mathfrak{C}_i$ whose stable endomorphism ring is isomorphic to $\k$ are precisely 
the modules in the $\Omega$-orbits  of the modules $F_{0,0}$, $F_{0,1}$ and $F_{0,2}$.

Since $\Ext_{\Ar}^1(F_{0,1},F_{0,1})=0$, it follows that $R(\Ar,F_{0,1})\cong \k$. Since $\Ext_{\Ar}^1(F_{0,0},F_{0,0})$ and $\Ext_{\Ar}^1(F_{0,2},F_{0,2})$ 
are isomorphic to $\k$ then $R(\Ar,F_{0,0})$ and $R(\Ar,F_{0,2})$ are quotients of $\k[[t]]$.  Let $T_0={_{hh}(\tau_{i+1}\tau_i)}$ and for all $0\leq j\leq k-1$ and $l\geq 1$, let $S_j=\underline{c}_{i+2}^j\tau_{i+1}\tau_i$ and $T_l={_{hh}}(\tau_{i+1}\tau_i)\zeta_i^{-1}T_{l-1}$. By using similar arguments as those in the proof of Proposition \ref{prop4}, we obtain that $R(\Ar,F_{0,0})\cong \k[[t]]/(t^k)$ and $R(\Ar,F_{0,2})\cong \k[[t]]$. This finishes the proof of Proposition \ref{prop6}.
\end{proof}

\subsection{$3$-tubes}
\begin{proposition}\label{prop7}
Let $\mathfrak{T_1}$ and $\mathfrak{T}_2$ be the two $3$-tubes of $\Gamma_s(\Ar)$, with $\bar{r}=(r_0,r_1,r_2,k)$ and $r_0,r_1,r_2\geq 2$ and $k\geq 
1$. Then $\Omega(\mathfrak{T}_1)=\mathfrak{T}_2$. Let  $T=\zeta_0^{-r_0+1}$ and define
\begin{align*}
 T_h=\zeta_0^{-r_0+1}\tau_2\zeta_2^{-r_2+1}&& \text{ and } && T_{hh}=\zeta_0^{-r_0+1}\tau_2\zeta_2^{-
r_2+1}\tau_1\zeta_1^{-r_1+1}.
\end{align*}
 The modules in $\mathfrak{T}_1\cup \mathfrak{T}_2$ whose stable endomorphism rings are isomorphic to $\k$ are precisely the modules in the $
\Omega$-orbit of $X_0=M[T]$, $X_1=M[T_h]$ and $X_2=M[T_{hh}]$. Their universal deformation rings are 
\begin{align*}
R(\Ar,X_0)\cong \k, && R(\Ar, X_1)\cong \k, &&  R(\Ar,X_2)\cong \k[[t]]
\end{align*}
\end{proposition}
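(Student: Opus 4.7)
The plan is to follow the same template used in Propositions \ref{prop4}, \ref{prop5} and \ref{prop6}. First I would use hooks and co-hooks (\S \ref{ape2}) to parametrize all string modules lying in $\mathfrak{T}_1 \cup \mathfrak{T}_2$. Since each tube has rank $3$, three natural boundary representatives of $\mathfrak{T}_1$ are $M[\zeta_0^{-r_0+1}]$, $M[\zeta_1^{-r_1+1}]$, $M[\zeta_2^{-r_2+1}]$, and a direct computation of $\Omega M[\zeta_i^{-r_i+1}]$ using the description of the indecomposable projective $P_i$ in (\ref{projec}) identifies these syzygies with the three boundary modules of the other tube. This simultaneously establishes $\Omega(\mathfrak{T}_1) = \mathfrak{T}_2$ and produces an enumeration of every module in $\mathfrak{T}_1 \cup \mathfrak{T}_2$ as strings obtained by iteratively attaching hooks to the boundary strings, together with their $\Omega$-translates in $\mathfrak{T}_2$.

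Next I would apply the homomorphism description of \S \ref{ape3} together with the structure of $P_i$ to weed out modules whose stable endomorphism ring is larger than $\k$. Any string strictly longer than $T$, $T_h$ or $T_{hh}$ (up to $\Omega$-translation) contains a subword equivalent to $\zeta_i^{-r_i+1}$ sandwiched between other letters, and such a subword produces a canonical endomorphism factoring through $M[\zeta_i^{-r_i+1}]$ that does not factor through a projective, giving $\dim_\k\SEnd_{\Ar}(M[S]) \geq 2$. A direct check (analogous to the computations in the proofs of Propositions \ref{prop4}--\ref{prop6}) then shows that $X_0$, $X_1$ and $X_2$ all have stable endomorphism ring isomorphic to $\k$, so these three $\Omega$-orbits exhaust the list.

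To pin down the universal deformation rings I would compute $\Ext^1_{\Ar}(X_j,X_j) \cong \underline{\Hom}_{\Ar}(\Omega X_j, X_j)$ using \S \ref{ape3} and (\ref{projec}). The expectation is that this space vanishes for $j = 0$ and $j = 1$, which by \cite[Prop. 2.1]{blehervelez} immediately gives $R(\Ar, X_0) \cong \k \cong R(\Ar, X_1)$, and is one-dimensional for $j = 2$, forcing $R(\Ar, X_2)$ to be a quotient of $\k[[t]]$. To upgrade the last conclusion to the full isomorphism $R(\Ar, X_2) \cong \k[[t]]$, I would mimic the proof of Claim \ref{claim2} within Proposition \ref{prop4}: set $T^{(0)} = T_{hh}$ and form longer strings $T^{(l)}$ by recursively gluing an additional copy of $T_{hh}$ onto $T^{(l-1)}$ through the vertex $0$ (using a formal inverse of an appropriate $\tau$ so that no forbidden subword appears). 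Each $M[T^{(l)}]$ carries a canonical nilpotent endomorphism
\[
\delta_l : M[T^{(l)}] \twoheadrightarrow M[T^{(l-1)}] \hookrightarrow M[T^{(l)}]
\]
which makes it into a $\k[[t]]/(t^{l+1}) \otimes_\k \Ar$-module that is free over $\k[[t]]/(t^{l+1})$ and defines a lift of $X_2$. Passing to the inverse limit $N = \invlim M[T^{(l)}]$ with $t$ acting as $\invlim \delta_l$ produces a lift of $X_2$ over $\k[[t]]$, inducing a surjection $R(\Ar, X_2) \twoheadrightarrow \k[[t]]$ which, combined with the upper bound, yields the claimed isomorphism.

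The hard part will be verifying that each iterated gluing $T^{(l)}$ actually produces a valid string, i.e.\ that no subword of $T^{(l)}$ or its formal inverse lies in $J$, and that the resulting module is free over $\k[[t]]/(t^{l+1})$. This amounts to a careful combinatorial check at the junction between successive copies of $T_{hh}$ and a basis-lifting argument identical in spirit to the one carried out in the proof of Claim \ref{claim1}. Once this is in place the remaining steps are routine applications of Theorem \ref{thm3} and Remark \ref{rem1}.
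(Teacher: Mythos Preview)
Your proposal is correct and follows essentially the same approach as the paper's proof: identify the $\Omega$-orbits via the projective structure, check stable endomorphism rings and $\Ext^1$ using \S\ref{ape3}, and for $X_2$ build a tower of lifts by iteratively gluing copies of $T_{hh}$ and passing to the inverse limit exactly as in Claim~\ref{claim2}. One small correction: the connecting letter in the gluing is the arrow $\tau_0$ itself (the paper writes $S_l=S_{l-1}\tau_0T_{hh}$), not a formal inverse---you would discover this during the string-validity check you already flagged as the delicate step.
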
  
\begin{proof}
Using the description of the projective indecomposable $\Ar$-modules in (\ref{projec}), we see that $\Omega(\mathfrak{T}_1)=\mathfrak{T}_2$. 
Using \S \ref{ape3} and the description of the projective indecomposable $\Ar$-module $P_i$ in (\ref{projec}), it is straightforward to show that the only $\Ar$-modules in $
\mathfrak{T}_1\cup \mathfrak{T}_2$ whose stable endomorphism rings are isomorphic to $\k$ lie in the $\Omega$-orbit of either $X_0=M[T]$, $X_1=M[T_h]$ or $X_2=M[T_{hh}]$.  
Since $\Ext_{\Ar}^1(X_j, X_j)=0$ for $j\in \{0,1\}$, we have that $R(\Ar,X_j)\cong \k$ for $j\in \{0,1\}$. Since $\Ext_{\Ar}^1(X_2,X_2)$ is isomorphic to $\k$, it follows that $R
(\Ar,X_2)$ is a quotient of $\k[[t]]$. Let $S_0=T_{hh}$ and for all $l\geq 1$, let $S_l=S_{l-1}\tau_iT_{hh}$. By using similar arguments as those in the proof of Claim \ref{claim2} within the proof of Proposition \ref{prop4}, we obtain that $R(\Ar,X_2)\cong \k[[t]]$, which 
proves Proposition \ref{prop7}.  
\end{proof}

%\begin{acknowledgements}
%The author would like to express his gratitude to his former advisor, Prof. Frauke M. Bleher, who made important comments and recommendations during the development of this 
%article.   
%\end{acknowledgements}

\bibliographystyle{amsplain}
\bibliography{bibliography}  % name your BibTeX data base\end{document}
\end{document}